\pgfplotsset{compat=1.18}
\title[Smoothness properties of multilinear commutators]{Smoothness properties related to several commutators of fractional operators for values of $\boldsymbol{p}$ beyond the extreme in the multilinear setting}
\author{}
\date{}
\theoremstyle{plain}
   \newtheorem{teo}{Theorem}
   \newtheorem{coro}[teo]{Corollary}
   \newtheorem{lema}[teo]{Lemma}
   \newtheorem{propo}[teo]{Proposition}
\theoremstyle{definition}
\theoremstyle{remark}
 \newtheorem{obs}{Remark}
\numberwithin{equation}{section}
\numberwithin{teo}{section}
\definecolor{aquamarine}{rgb}{0.5, 1.0, 0.83}
\definecolor{americanrose}{rgb}{1.0, 0.01, 0.24}
\definecolor{arsenic}{rgb}{0.23, 0.27, 0.29}
\definecolor{blizzardblue}{rgb}{0.67, 0.9, 0.93}
\definecolor{blush}{rgb}{0.87, 0.36, 0.51}
\definecolor{celestialblue}{rgb}{0.29, 0.59, 0.82}
\definecolor{chocolate(web)}{rgb}{0.82, 0.41, 0.12}
\definecolor{brightpink}{rgb}{1,0,0.5}
\definecolor{cadmiunred}{rgb}{0.89,0,0.13}
\definecolor{darkpastelpurple}{rgb}{0.59, 0.44, 0.84}
\definecolor{atomictangerine}{rgb}{1.0, 0.6, 0.4}
\newcounter{BPR}
\begin{document}
%--------------------------------------------------------------------
% AUTORES
%--------------------------------------------------------------------

% Information for first author
\author[F. Berra]{Fabio Berra}
\address{Fabio Berra, CONICET and Departamento de Matem\'{a}tica (FIQ-UNL),  Santa Fe, Argentina.}
\email{fberra@santafe-conicet.gov.ar}
	
%Information for second author
\author[W. Ramos]{Wilfredo Ramos}
\address{CONICET and Departamento de Matem\'{a}tica (FaCENA-UNNE),  Corrientes, Argentina.}
\email{wilfredo.ramos@comunidad.unne.edu.ar}
	
%Information for third author

%--------------------------------------------------------------------
% AFILIACIONES, CLASIFICACION y PALABRAS CLAVE
%--------------------------------------------------------------------
 
\thanks{The author were supported by CONICET, UNL, ANPCyT and UNNE}
	
\subjclass[2010]{26A33, 26D10}

\keywords{Multilinear commutators, fractional operators, Lipschitz spaces, weights}

\maketitle

%--------------------------------------------------------------------
% ABSTRACT
%--------------------------------------------------------------------
\begin{abstract}
We prove continuity properties of higher order commutators of  fractional operators on the multilinear setting, between a product of weighted Lebesgue spaces into certain weighted Lipschitz spaces.
The considered operators include the multilinear fractional integral function and the main results extend previous estimates known for the unweighted case, as well as those established on the linear case. 

We give a complete study showing the main properties of the related multilinear weights and the optimal region described by the parameters where we can find nontrivial examples, including the restricted case of the one-weight theory. We also exhibit examples of such weights on the whole region.
\end{abstract}

%--------------------------------------------------------------------
% CUERPO DEL TEXTO
%--------------------------------------------------------------------
\section{Introduction}\label{seccion: introduccion}

In 1974, B. Muckenhoupt and R. Wheeden characterized the weight functions $w$ for which the strong $(p,q)$ inequality
\[\left(\int_{\mathbb{R}^n}(M_\alpha f)^qw^q\right)^{1/q}\leq C\left(\int_{\mathbb{R}^n}|f|^pw^p\right)^{1/p}\]
holds, where $M_\alpha$ is the classical fractional maximal operator given by
\[M_\alpha f(x)=\sup_{Q\ni x}\frac{1}{|Q|^{1-\alpha/n}}\int_Q |f|\]
and $1/q=1/p-\alpha/n$ (see \cite{Muckenhoupt-Wheeden74}). The weights involved belong to the so-called $A_{p,q}$ classes, which turn out to be a variant of the Muckenhoupt $A_p$ weights related to the strong $(p,p)$ problem for the classical Hardy-Littlewood maximal function. As well as for $M_\alpha$, they proved the same $(p,q)$ strong estimate for the fractional integral operator $I_\alpha$. Although this operator fails to have a strong $(n/\alpha,\infty)$ type, it was shown in \cite{Muckenhoupt-Wheeden74} that it maps $L^{n/\alpha}(w^{n/\alpha})$ into a weighted BMO space. When $n/\alpha<p<n/(\alpha-1)^+$,  $I_\alpha$ maps $L^p(w^p)$ into a suitable Lipschitz-$\delta$ space where the relation $\delta/n=\alpha/n-1/p$ holds, as it was proved in \cite{Pradolini01}. In this article a two-weighted problem was also given, as well as the region described by the parameters involved such that the considered classes of weights are nontrivial. 
 For the two-weighted theory we also refer the reader to \cite{SW92} and \cite{Pradolini01}.

On the other hand, the study of multilinear operators caught great attention over the last years. Perhaps a big step into this topic was done in \cite{L-O-P-T-T}, where a weighted theory for certain multilinear version of the Hardy-Littlewood maximal function and Calderón-Zygmund operators was introduced. The multilinear classes of weights are extensions of the classical $A_p$ functions, and their main properties were also established. This set a point of departure from which the multilinear theory opened several directions of study. For example, the characterization of multilinear weights in order to have strong type inequalities for the multilinear fractional integral operator was given in \cite{Moen09} (see also \cite{Pradolini10}). The outcoming classes of weights turned out to be multilinear extensions of the classical $A_{p,q}$ classes  and also power and logarithmic bumps for the case of two weights estimates.

In \cite{BPR22} we studied boundedness properties of the multilinear fractional integral operator $I_\alpha^m$ acting between a product of weighted Lebesgue spaces into the Lipschitz space $\mathbb{L}_w(\delta)$, defined as the collection of $f\in L^1_{\text{loc}}$ such that $\|f\|_{\mathbb{L}_w(\delta)}<\infty$, where
\[\|f\|_{\mathbb{L}_w(\delta)}=\sup_{B\subset \mathbb{R}^n}\frac{\|w\mathcal{X}_B\|_\infty}{|B|^{1+\delta/n}}\int_{B}|f-f_B|,\]
being $f_B$ the average of $f$ over $B$.
Concretely, we proved that $I_\alpha^m$ maps $\prod_{i=1}^mL^{p_i}(v^{p_i})$ into the space $\mathbb{L}_w(\delta)$ if and only if
there exists a positive constant $C$ such that the inequality
\begin{equation}\label{eq: clase Hbb(p, alfa, delta)}
\frac{\|w\mathcal{X}_B\|_\infty}{|B|^{(\delta-1)/n}}\prod_{i=1}^m\left\|\frac{v_i^{-1}}{(|B|^{1/n}+|x_B-\cdot|)^{n-\alpha_i+1/m}}\right\|_{p_i'}\leq C
\end{equation}
holds for every ball $B=B(x_B,R)$, where $0<\alpha_i<n$ for every $i$ and $\sum_{i=1}^m\alpha_i=\alpha$. In this case we say that
 the pair $(w,\vec{v})$ belongs to the class $\mathbb{H}_m(\vec{p},\alpha,\delta)$.
 % , being $\vec{p}=(p_1,\dots,p_m)$ a vector of exponents with $p_i\geq 1$ for every $i$. 
 When $m=1$, this result generalizes the two-weighted problem for the linear case given in \cite{Pradolini01} and also the unweighted estimates for $I_\alpha^m$ established in \cite{AHIV}. 
 
 A similar problem was established in \cite{BPR22(2)}, where we considered the Lipschitz class given by the collection of locally integrable functions $f$ such that $\|f\|_{\mathcal{L}_w(\delta)}<\infty$, where
 \[\|f\|_{\mathcal{L}_w(\delta)}=\sup_{B\subset \mathbb{R}^n}\frac{1}{w^{-1}(B)|B|^{\delta/n}}\int_{B}|f-f_B|.\]
In this context we characterized the pairs $(w,\vec{v})$ for which
$I_\alpha^m$ maps $\prod_{i=1}^mL^{p_i}(v^{p_i})$ into the space $\mathcal{L}_w(\delta)$ as those belonging to the class $\mathcal{H}_m(\vec{p},\alpha,\delta)$, that is, the pairs satisfying the inequality 
\begin{equation}\label{eq: clase Hcal(p, alfa, delta)}
\frac{1}{w^{-1}(B)|B|^{\delta/n}}\prod_{i=1}^m\left\|\frac{v_i^{-1}}{(|B|^{1/n}+|x_B-\cdot|)^{n-\alpha_i+1/m}}\right\|_{p_i'}\leq C
\end{equation}
for some positive constant $C$ and every ball $B=B(x_B,R)$. This last result extend the two-weighted estimates given for the linear case in \cite{Prado01cal} as well as the one weight boundedness for $I_\alpha$ proved in \cite{HSV}.

At a first glance we can observe that $\mathbb{H}_m(\vec{p},\gamma,\delta)\subset \mathcal{H}_m(\vec{p},\gamma,\delta)$. Furthermore, when $w\in A_1$ both classes coincide since in this case we have $\|w\mathcal{X}_B\|_\infty \approx \frac{|B|}{w^{-1}(B)}$. The same relation holds between the respective Lipschitz classes $\mathbb{L}_w(\delta)$ and $\mathcal{L}_w(\delta)$. Despite this fact, however, 
these classes of weights are substantially different. For instance, when we restrict our attention to the case $w=\prod_{i=1}^mv_i$, which is the multilinear generalization of the one weight case for $m=1$, we can see that under certain conditions \eqref{eq: clase Hbb(p, alfa, delta)} is equivalent to the $A_{\vec{p},\infty}$ class, whilst from \eqref{eq: clase Hcal(p, alfa, delta)} we can deduce that certain power of $w^{-1}$ belongs to a reverse Hölder class. Moreover, if we compare the regions described by the parameters involved where we find nontrivial weights, we shall see that the contention claimed above is strict. These properties were established in \cite{BPR22} and \cite{BPR22(2)}, respectively.

 Recently, in \cite{BPRe} the authors studied similar estimates as above for commutators of fractional operators which generalize $I_\alpha^m$. The corresponding class of weights is a variant of \eqref{eq: clase Hbb(p, alfa, delta)}. 

We devote this article to study continuity properties for commutators of fractional operators acting from a product of weighted Lebesgue spaces into the Lipschitz space $\mathcal{L}_w(\delta)$. The corresponding operators include the multilinear fractional integral function $I_{\alpha}^m$. Our aim is to obtain boundedness results for the operators above for a wider class of pairs of weights that contain those given in  \cite{BPRe}.

We begin by introducing the multilinear weights we shall consider throughout the article. In the sequel, let $m\in\mathbb{N}$ denote the multilinear parameter.

Let $\vec{p}=(p_1,p_2,\dots,p_m)$ be a vector of exponents such that  $1\leq p_i\leq \infty$ for every $i$. Let $\beta$, $\delta$ and $\tilde\delta$ be real constants.  Given $w$, $\vec{v}=(v_1,v_2,\dots,v_m)$ and $\vec{p}$, we say that $(w,\vec{v})\in \mathcal{H}_m(\vec{p}, \beta,\tilde\delta)$ if there exists a positive constant $C$ such that the inequality
	\begin{equation*}
    \frac{|B|^{1+(\delta-\tilde\delta)/n}}{w^{-1}(B)}\prod_{i=1}^m\left(\int_{\mathbb{R}^n}\frac{v_i^{-p_i'}}{(|B|^{1/n}+|x_B-y|)^{(n-\beta_i+\delta/m)p_i'}}\,dy\right)^{1/p_i'}\leq C
	\end{equation*}
	holds for every ball $B=B(x_B,R)$, with the  obvious changes when $p_i=1$. The numbers $\beta_i$ satisfy $\sum_{i=1}^m \beta_i=\beta$ and $0<\beta_i<n$, for every $i$. In Section~\ref{seccion: clase de pesos} we shall exhibit further details and basic properties of these classes of weights.

As we said above, the main topic of  this article is the study of the boundedness properties for higher order commutators related to some generalizations of the multilinear fractional integral operator of order $m$ defined as follows
	\[I_\alpha^m \vec{f}(x)=\int_{(\mathbb{R}^n)^m} \frac{\prod_{i=1}^m f_i(y_i)}{(\sum_{i=1}^m|x-y_i|)^{mn-\alpha}}\,d\vec{y},\]
	where $0<\alpha<mn$, $\vec{f}=(f_1,f_2,\dots, f_m)$ and $\vec{y}=(y_1,y_2,\dots, y_m)$. These operators are given by
	\[T_\alpha^m\vec{f}(x)=\int_{(\mathbb{R}^n)^m} K_\alpha(x,\vec{y})\prod_{i=1}^m f_i(y_i)\,d\vec{y},\]
	where $K_\alpha$ is a kernel satisfying some  standard size  and regularity conditions (see Section~\ref{seccion: preliminares}). 
 
We shall consider two different types of higher order commutators of $T_\alpha^m$.
	Given an $m$-tuple of functions $\bm{b}=(b_1,\dots,b_m)$, where  $b_i\in L^1_{\rm{loc}}$ for every $i$, the first type of multilinear commutator is given by a sum of linear  commutators as follows 
\[T_{\alpha,\bm{b}}^m\vec{f}(x)=\sum_{j=1}^m T_{\alpha,b_j}^m\vec{f}(x),\]
where formally 
\[T_{\alpha,b_j}^m\vec{f}(x)=\int_{(\mathbb{R}^n)^m}(b_j(x)-b_j(y_j))K_\alpha(x,\vec{y})\prod_{i=1}^m f_i(y_i)\,d\vec{y}\]
is the commutation of $b_j$ with $T_\alpha^m$ on the $j-$th entry of $T_\alpha^m$.

On the other hand we shall also consider a product type commutator which involves a recursive definition (see Section~\ref{seccion: preliminares}) and that can be expressed as
\[\mathcal{T}_{\alpha,\bm{b}}^m\vec{f}(x)=\int_{(\mathbb{R}^n)^m}K_\alpha(x,\vec{y})\prod_{i=1}^m(b_i(x)-b_i(y_i))f_i(y_i)\,d\vec{y}.\]
Both integral representations above are a consequence of the definition of the corresponding commutators and the representation for the operator $T_\alpha^m$. These type of commutators
 were introduced in \cite{PT03} and \cite{PPTT}, respectively.

Regarding the multilinear symbol involved, we shall consider multilinear functions with components belonging  to the classical Lipschitz spaces $\Lambda(\delta)$. The definition of these classes can be found in Section~\ref{seccion: preliminares}.

We are now in a position to state our main results. We shall denote  
	$1/p=\sum_{i=1}^m1/p_i$ along the manuscript. Further details can also be found in Section~\ref{seccion: preliminares}. 

\begin{teo}\label{teo: acotacion Lp Lipschitz para T_alpha,b (suma)}
Let $0<\alpha<mn$ and $T_\alpha^m$ be a multilinear fractional operator with kernel $K_\alpha$ satisfying \eqref{eq: condicion de tamaño} and \eqref{eq: condicion de suavidad}. Let $0<\delta<\min\{\gamma,mn-\alpha\}$, $\tilde\alpha=\alpha+\delta$ and $\vec{p}$ a vector of exponents that satisfies $p>n/\tilde\alpha$. Let  $\bm{b}=(b_1,\dots,b_m)$ a multilinear symbol such that $b_i\in\Lambda(\delta)$, for $1\leq i \leq m$. Let $\tilde\delta\leq \delta$ and  $(w,\vec{v})$ be a pair of weights in $\mathcal{H}_m(\vec{p},\tilde\alpha,\tilde\delta)$ such that $v_i^{-p_i'}\in\mathrm{RH}_m$, for every $i$ such that $1<p_i\leq \infty$. Then the  multilinear commutator $T_{\alpha,\bm{b}}^m$ is bounded from $\prod_{i=1}^mL^{p_i}(v_i^{p_i})$ to $\mathcal{L}_w(\tilde\delta)$, that is, there exists a positive constant $C$ such that the estimate
\[\frac{1}{w^{-1}(B)|B|^{\tilde\delta/n}}\int_B |T_{\alpha,\bm{b}}^m\vec{f}(x)-(T_{\alpha,\bm{b}}^m\vec{f})_B|\,dx\leq C\prod_{i=1}^m\|f_iv_i\|_{p_i}\]
holds for every ball $B$ and every $\vec{f}$ such that $f_iv_i\in L^{p_i}$, for $1\leq i\leq m$.
\end{teo}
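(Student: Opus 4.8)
The plan is to reduce the oscillation estimate for the sum commutator $T_{\alpha,\bm b}^m$ to a pointwise-type control of $T_{\alpha,b_j}^m\vec f(x)-(T_{\alpha,b_j}^m\vec f)_B$ for each fixed $j$, and then to use the defining inequality of the class $\mathcal H_m(\vec p,\tilde\alpha,\tilde\delta)$ together with the reverse Hölder hypotheses on the $v_i^{-p_i'}$. Fix a ball $B=B(x_B,R)$. For the $j$-th summand I would split $b_j(x)-b_j(y_j)=\bigl(b_j(x)-b_{j,B}\bigr)-\bigl(b_j(y_j)-b_{j,B}\bigr)$, where $b_{j,B}$ is the average of $b_j$ over $B$; the first piece is $O(R^{\delta})$ on $B$ by $b_j\in\Lambda(\delta)$, and multiplies a copy of $T_\alpha^m$, so its contribution is handled by the corresponding (commutator-free) result already in hand, now with the gain $R^\delta$ absorbed into the shift $\alpha\mapsto\tilde\alpha=\alpha+\delta$. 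For the second piece, write $f_i=f_i\mathcal X_{2B}+f_i\mathcal X_{(2B)^c}$ and expand the product over $i$ into $2^m$ terms; the purely local term and the terms with at least one far factor are treated separately. Throughout, the weight $b_j(y_j)-b_{j,B}$ is estimated by $|b_j(y_j)-b_{j,B}|\lesssim \|b_j\|_{\Lambda(\delta)}(R+|x_B-y_j|)^\delta$, which is exactly what is needed so that the kernel factor $(\sum_i|x-y_i|)^{-(mn-\alpha)}$ times this weight behaves like the kernel of $I_{\tilde\alpha}^m$ split across the entries as $\sum_i\beta_i=\tilde\alpha$ with $\beta_i=\alpha_i$ for $i\neq j$ and $\beta_j=\alpha_j+\delta$.

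More concretely, for the local term I would bound $\int_B|T_{\alpha,b_j}^m(f_1\mathcal X_{2B},\dots)|$ using the size condition \eqref{eq: condicion de tamaño} on $K_\alpha$, the Lipschitz bound on the symbol giving a factor $R^\delta$, and Hölder's inequality in each variable; the reverse Hölder property $v_i^{-p_i'}\in\mathrm{RH}_m$ is what lets me pass from the local $L^{p_i'}$ norm of $v_i^{-1}$ on $2B$ to the quantity appearing in the $\mathcal H_m$ condition. For each term with a far factor in entry $k$, I would use the smoothness condition \eqref{eq: condicion de suavidad} on $K_\alpha$ to estimate $K_\alpha(x,\vec y)-K_\alpha(\bar x,\vec y)$ for $x,\bar x\in B$ and $y_k\in(2B)^c$, which produces the extra decay $\bigl(R/|x_B-y_k|\bigr)^{\varepsilon}$ needed for summability of the dyadic annuli $2^{\ell+1}B\setminus 2^\ell B$; combined with the $(R+|x_B-y_j|)^\delta$ weight and the $\mathcal H_m$ inequality applied on each annulus, the geometric series in $\ell$ converges because the exponent $n-\beta_i+\delta/m$ in the class definition is strictly less than what would make the integrals diverge — this is where the hypothesis $p>n/\tilde\alpha$ (equivalently the smallness of $\delta$, $\delta<mn-\alpha$) enters, guaranteeing $mn-\tilde\alpha>0$. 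Summing over $j=1,\dots,m$ and over the $2^m$ product-splitting terms gives the claimed bound $C\prod_i\|f_iv_i\|_{p_i}$ after dividing by $w^{-1}(B)|B|^{\tilde\delta/n}$.

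The main obstacle, I expect, is the bookkeeping of the mixed terms: those in which some entries are localized to $2B$ and others are far, while simultaneously $b_j$ is commuted in a possibly third entry. One must choose the split of $\tilde\alpha$ into $(\beta_1,\dots,\beta_m)$ correctly — putting the $\delta$-gain on the $j$-th entry — and verify that each resulting integral over a far annulus is controlled by the $\mathcal H_m(\vec p,\tilde\alpha,\tilde\delta)$ constant evaluated at the \emph{enlarged} ball $2^\ell B$, with a constant independent of $\ell$, so that the summation in $\ell$ only costs a convergent geometric factor. A secondary technical point is ensuring that the passage between the ball-indexed $\mathcal H_m$ condition and the cube/annulus estimates does not lose the reverse Hölder gain; since $\mathrm{RH}_m$ is stable under dilation of the ball, this is routine but must be stated. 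Once the enlarged-ball version of the class inequality and the $\mathrm{RH}_m$ self-improvement are in place, the estimate for the difference $T_{\alpha,\bm b}^m\vec f(x)-(T_{\alpha,\bm b}^m\vec f)_B$ reduces to the already-established continuity of $I_{\tilde\alpha}^m$-type operators into $\mathcal L_w(\tilde\delta)$, i.e.\ to \eqref{eq: clase Hcal(p, alfa, delta)} with $\alpha$ replaced by $\tilde\alpha$ and $\delta$ by $\tilde\delta$, together with the elementary remark $\tilde\delta\le\delta$ used to absorb residual powers of $R$.
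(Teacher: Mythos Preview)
Your outline has the right overall architecture, but two of the steps as you describe them would not close.

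For the local term, ``Hölder in each variable'' is not enough. If you dominate the kernel by $\prod_i|x-y_i|^{-(n-\tilde\alpha/m)}$ and apply Hölder with exponent $p_i'$ in the $i$-th slot, the factor $\int_{2B}|x-y_i|^{-(n-\tilde\alpha/m)p_i'}v_i^{-p_i'}(y_i)\,dy_i$ diverges whenever $p_i\le mn/\tilde\alpha$, which is not excluded by the hypotheses. The paper instead handles the local piece with a separate lemma (Lemma~\ref{lema: acotacion local de Ialpha,m}): after the pointwise bound $|b_j(x)-b_j(y_j)|\le\|b_j\|_{\Lambda(\delta)}\bigl(\sum_i|x-y_i|\bigr)^\delta$ reduces matters to $\int_B|I_{\tilde\alpha}^m(\vec f\,\mathcal X_{2B})|$, one invokes the \emph{unweighted} strong-type estimate $I_{\tilde\alpha_0}\colon\prod_j L^{r_j}\to L^q$ (over the entries with $p_i>1$) for carefully chosen auxiliary exponents $r_j$ and $\tilde\alpha_0$. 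It is at this step that both $p>n/\tilde\alpha$ (to guarantee $0<1/q<1$) and $v_i^{-p_i'}\in\mathrm{RH}_m$ (to convert the resulting $L^{mp_i'}$-type averages of $v_i^{-1}$ back to the $L^{p_i'}$-averages matching~\eqref{eq: condicion local}) are actually consumed. Your subtraction of the average $b_{j,B}$ is a BMO device that is unnecessary for Lipschitz symbols; the paper uses the direct bound above and never splits the symbol.

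For the far terms, applying the $\mathcal H_m$ condition ``at the enlarged ball $2^\ell B$'' produces a factor $w^{-1}(2^\ell B)$ on the right-hand side, and since no doubling is assumed on $w^{-1}$ you cannot replace this by $w^{-1}(B)$. The point you are missing is that the defining inequality of $\mathcal H_m(\vec p,\tilde\alpha,\tilde\delta)$ is already \emph{global}: the $p_i'$-norms run over all of $\mathbb R^n$ against the weight $(|B|^{1/n}+|x_B-\cdot|)^{-(n-\tilde\alpha_i+\delta/m)}$, so a single application at the fixed ball $B$ yields the mixed consequence~\eqref{eq: condicion mezclada general}, which bounds $\prod_{i:\sigma_i=0}\bigl\|v_i^{-1}\mathcal X_{\mathbb R^n\setminus 2B}\,|x_B-\cdot|^{-(n-\tilde\alpha_i+\delta/m)}\bigr\|_{p_i'}$ directly in terms of $w^{-1}(B)$. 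No dyadic summation is needed, and the smoothness parameter $\gamma$ enters only once, via $\gamma>\delta$, to pass from $|x-z|^\gamma\bigl(\sum_i|x-y_i|\bigr)^{-\gamma}$ to $|B|^{\delta/n}\bigl(\sum_i|x_B-y_i|\bigr)^{-\delta}$; your attribution of the series convergence to the condition $p>n/\tilde\alpha$ misplaces that hypothesis.
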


The theorem above is a generalization to the multilinear setting of the estimate for commutators given in \cite{PR}. 

Concerning the commutator $\mathcal{T}_{\alpha,\bm{b}}^m$ we have the following result.

\begin{teo}\label{teo: acotacion Lp Lipschitz para T_alpha,b (producto)}
Let $0<\alpha<mn$ and $T_\alpha^m$ be a multilinear fractional operator with kernel $K_\alpha$ satisfying \eqref{eq: condicion de tamaño} and \eqref{eq: condicion de suavidad}. Let  $0<\delta<\min\{\gamma,(mn-\alpha)/m\}$, $\tilde\alpha=\alpha+m\delta$ and $\vec{p}$ a vector of exponents that satisfies $p>n/\tilde\alpha$. Let  $\bm{b}=(b_1,\dots,b_m)$ be a vector of symbols such that $b_i\in\Lambda(\delta)$, for $1\leq i \leq m$. Let $\tilde\delta\leq \delta$ and  $(w,\vec{v})$ be a pair of weights in $\mathcal{H}_m(\vec{p},\tilde\alpha,\tilde\delta)$ such that $v_i^{-p_i'}\in\mathrm{RH}_m$, for every $1<p_i\leq \infty$. Then the  multilinear commutator $\mathcal{T}_{\alpha,\bm{b}}^m$ is bounded from $\prod_{i=1}^mL^{p_i}(v_i^{p_i})$ to $\mathcal{L}_w(\tilde\delta)$, that is, there exists a positive constant $C$ such that the inequality
\[\frac{1}{w^{-1}(B)|B|^{\tilde\delta/n}}\int_B |\mathcal{T}_{\alpha,\bm{b}}^m\vec{f}(x)-(\mathcal{T}_{\alpha,\bm{b}}^m\vec{f})_B|\,dx\leq C\prod_{i=1}^m\|f_iv_i\|_{p_i}\]
holds for every ball $B$ and every $\vec{f}$ such that $f_iv_i\in L^{p_i}$, for $1\leq i\leq m$.
\end{teo}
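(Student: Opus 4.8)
The plan is to mimic the scheme of the proof for the sum-type commutator (Theorem~\ref{teo: acotacion Lp Lipschitz para T_alpha,b (suma)}), but now exploiting the product structure of $\mathcal{T}_{\alpha,\bm b}^m$. Fix a ball $B=B(x_B,R)$. Write each symbol as $b_i(x)-b_i(y_i)=(b_i(x)-(b_i)_B)+((b_i)_B-b_i(y_i))$ and expand the product $\prod_{i=1}^m(b_i(x)-b_i(y_i))$ into $2^m$ terms indexed by subsets $S\subseteq\{1,\dots,m\}$: in the $S$-term the factors with $i\in S$ are $(b_i(x)-(b_i)_B)$ and are pulled outside the integral, leaving $\prod_{i\notin S}((b_i)_B-b_i(y_i))f_i(y_i)$ inside. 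The key point is that, since $b_i\in\Lambda(\delta)$, the factor $\prod_{i\in S}|b_i(x)-(b_i)_B|$ is controlled on $B$ by $C\,|B|^{|S|\delta/n}\prod_{i\in S}\|b_i\|_{\Lambda(\delta)}$, while the inner integral is exactly an operator of the type $\mathcal{T}_{\alpha,\bm b^{(S)}}^{m}$ acting on the modified tuple $\vec g$ with $g_i=((b_i)_B-b_i)f_i$ for $i\notin S$ and $g_i=f_i$ for $i\in S$; for $S=\emptyset$ it is the full product commutator, and for $S=\{1,\dots,m\}$ it is merely $\mathrm{const}\cdot I_\alpha^m$-type applied to $\vec f$.

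For each term we estimate $|B|^{-\tilde\delta/n}w^{-1}(B)^{-1}\int_B |F_S-(F_S)_B|$ where $F_S$ is the corresponding piece. As in the linear fractional-Lipschitz machinery, this reduces (using $\int_B|F-F_B|\le 2\int_B|F-c|$ for any constant $c$) to splitting $\vec f = \vec f\mathcal X_{2B}+\vec f\mathcal X_{(2B)^c}$ across the $m$ coordinates, giving a local part and several "tail'' parts. The local part is handled by the size condition \eqref{eq: condicion de tamaño} together with Hölder's inequality across the $m$ factors and the hypothesis $v_i^{-p_i'}\in\mathrm{RH}_m$, which is precisely what lets one absorb the extra $|B|^{\delta/n}$-type factors coming from the reverse-Hölder exponent $m$ and convert products of $L^{p_i'}$-norms of $v_i^{-1}$ over $2B$ into the $\mathcal H_m(\vec p,\tilde\alpha,\tilde\delta)$ quantity with $\tilde\alpha=\alpha+m\delta$ (each of the $m$ symbols contributes one $\delta$, explaining why here $\tilde\alpha$ carries $m\delta$ rather than $\delta$). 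For the tail parts one uses the regularity condition \eqref{eq: condicion de suavidad} on $K_\alpha$, subtracting the value of the kernel at the center $x_B$; the smoothness gain $|y-x_B|^{-\varepsilon}$ (for some $\varepsilon>0$) produces a convergent sum over the dyadic annuli $2^{k+1}B\setminus 2^kB$, and again Hölder plus $\mathcal H_m(\vec p,\tilde\alpha,\tilde\delta)$ on each $2^kB$ closes the estimate, the geometric decay being guaranteed by $\delta<\gamma$ and $p>n/\tilde\alpha$ (equivalently $\tilde\delta\le\delta<(mn-\alpha)/m$ ensures the exponents stay in the admissible range).

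The main obstacle I expect is bookkeeping the interaction between the $2^m$-fold symbol expansion and the $2^m$-fold domain splitting simultaneously: one has to check that in every combined term the total homogeneity in $|B|$ matches the target weight $|B|^{\tilde\delta/n}w^{-1}(B)$ exactly, and that the $\mathrm{RH}_m$ hypothesis is invoked with the correct exponent in each of the mixed local/tail factors — in particular for a coordinate $i$ that is simultaneously "outside'' ($i\in S$, so carrying a $(b_i(x)-(b_i)_B)$ factor) the relevant bound on $2^kB$ must still be $|2^kB|^{\delta/n}\|b_i\|_{\Lambda(\delta)}$ with the right growth in $k$, and one must verify this extra $k$-growth is defeated by the kernel's decay. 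Once the $S=\emptyset$ term (the genuine product commutator with all symbols inside) is shown to reduce, after adding and subtracting $(b_i)_B$, to a finite linear combination of the already-treated terms, the remaining cases are strictly easier, and collecting all $\mathcal O(2^m)$ contributions yields the claimed bound $C\prod_{i=1}^m\|f_iv_i\|_{p_i}$.
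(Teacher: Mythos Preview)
Your strategy of expanding each factor $b_i(x)-b_i(y_i)$ around $(b_i)_B$ is the familiar move from BMO-commutator theory, but it is not what the paper does, and as sketched it has two genuine gaps.

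\emph{Local part.} After your expansion, using $|b_i(x)-(b_i)_B|\lesssim R^\delta$ for $i\in S$ and $|(b_i)_B-b_i(y_i)|\lesssim R^\delta$ for $i\notin S$, $y_i\in 2B$, every local term is dominated by $R^{m\delta}\int_B I_\alpha^m(|\vec f|\mathcal X_{(2B)^m})$. This does \emph{not} reduce to Lemma~\ref{lema: acotacion local de Ialpha,m (producto)}, which is for $I_{\tilde\alpha}^m$: on $(2B)^m$ one only has $(\sum_i|x-y_i|)^{m\delta}\lesssim R^{m\delta}$, so the pointwise comparison goes the wrong way. Worse, the proof of that lemma uses $p>n/\tilde\alpha$ essentially, and from this you cannot infer $p>n/\alpha$; in fact when $\alpha\le (m-1)n$ the quantity $\int_B I_\alpha^m(|\vec f_1|)$ can be infinite even for bounded $\vec f$. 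The paper avoids the expansion entirely and bounds $\prod_i|b_i(x)-b_i(y_i)|\le\|\bm b\|_{(\Lambda(\delta))^m}^m(\sum_i|x-y_i|)^{m\delta}$ pointwise, which turns $|K_\alpha|$ into the $I_{\tilde\alpha}^m$ kernel exactly and makes Lemma~\ref{lema: acotacion local de Ialpha,m (producto)} directly applicable.

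\emph{Tail part.} Applying $\mathcal H_m(\vec p,\tilde\alpha,\tilde\delta)$ ``on each $2^kB$'' produces factors $w^{-1}(2^kB)$, and since no doubling is assumed on $w^{-1}$ there is no way to sum these back to $w^{-1}(B)|B|^{\tilde\delta/n}$. (Incidentally, the growth in $k$ comes from $|(b_i)_B-b_i(y_i)|\lesssim(2^kR)^\delta$ for $i\notin S$, not from the $i\in S$ factors, which stay bounded by $R^\delta$.) The paper uses no dyadic decomposition: one subtracts $(\mathcal T_{\alpha,\bm b}^m\vec f_2)_B$, writes $\mathcal T_{\alpha,\bm b}^m\vec f_2(x)-\mathcal T_{\alpha,\bm b}^m\vec f_2(z)$ as a sum over $\sigma\in S_m$, $\sigma\neq\bm1$, of integrals over $(\bm{2B})^\sigma$, and splits
\[K_\alpha(x,\vec y)\prod_i(b_i(x)-b_i(y_i))-K_\alpha(z,\vec y)\prod_i(b_i(z)-b_i(y_i))\]
into a kernel-difference piece, controlled by~\eqref{eq: condicion de suavidad}, and a symbol-product-difference piece, controlled by the telescoping identity of Lemma~\ref{lema: estimacion diferencia de productos}. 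A single application of H\"older in each coordinate then lands on the mixed condition~\eqref{eq: condicion mezclada general} for the ball $B$ itself; the decay over $\mathbb R^n\setminus 2B$ is already built into the norms $\bigl\|v_i^{-1}\mathcal X_{\mathbb R^n\setminus 2B}\,|x_B-\cdot|^{-(n-\tilde\alpha_i+\delta/m)}\bigr\|_{p_i'}$, so no sum in $k$ is required.
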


The theorems above can be seen as the corresponding extensions of Theorem 1.1 and Theorem 1.2 in \cite{BPRe} since $\mathcal{H}_m$ and $\mathcal{L}_w(\delta)$ classes are wider than $\mathbb{H}_m$ and $\mathbb{L}_w(\delta)$, respectively. Note that the restriction on the parameter $\delta$ implicated is different in each theorem due to the nature of the considered commutators. 

Observe that the condition $v_i^{-p_i'}\in \mathrm{RH}_m$ plays a bump-type role, which is required when we deal with two weight estimates. We also observe that when $m=1$ this condition implies no additional assumption and the linear results are covered.   

\medskip

When we deal with one weight estimates we have the relation $w=\prod_{i=1}^m v_i$ and in this case  we say that $\vec{v}\in \mathcal{H}_m(\vec{p},\beta,\tilde\delta)$ to indicate that  $(w,\vec{v})$ is a pair in the class.
In \cite{BPR22(2)} we showed that when we restrict the pair of weights $(w,\vec{v})$ to satisfy this relation, the corresponding class of weights imply a reverse Hölder condition for $\prod_{i=1}^m v_i^{-1}$. We shall prove that this is also the case for the considered class  $\mathcal{H}_m(\vec{p},\beta,\tilde\delta)$.

The following result establishes the relation on the parameters required to have related weights on our class. 

\begin{teo}\label{teo: consecuencia de pesos iguales}
Let $0<\beta<mn$, $\tilde\delta\in\mathbb{R}$ and $\vec{p}$ be a vector of exponents. If $\vec{v}\in \mathcal{H}_m(\vec{p},\beta,\tilde\delta)$ and $\xi=p/(mp-1)>1$, then we have that $\tilde\delta=\beta-n/p$.
\end{teo}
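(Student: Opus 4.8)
The plan is to collapse the $\mathcal H_m(\vec p,\beta,\tilde\delta)$ inequality into a single power of $|B|$ and then force that power to vanish. First I would fix an arbitrary ball $B=B(x_B,R)$ and localize the integrals in the definition of the class to $B$ itself: since $|B|^{1/n}+|x_B-y|\approx |B|^{1/n}$ whenever $y\in B$ and the exponents $n-\beta_i+\delta/m$ are positive, restricting each integral from $\mathbb R^n$ to $B$ only decreases the corresponding factor, so membership $\vec v\in\mathcal H_m(\vec p,\beta,\tilde\delta)$ yields
\[
\frac{|B|^{1+(\delta-\tilde\delta)/n}}{w^{-1}(B)}\;\prod_{i=1}^{m}|B|^{-(n-\beta_i+\delta/m)/n}\;\prod_{i=1}^{m}\Big(\int_B v_i^{-p_i'}\Big)^{1/p_i'}\le C,
\]
with the usual modifications when some $p_i\in\{1,\infty\}$. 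Using $\sum_{i=1}^m\beta_i=\beta$ and simplifying the exponent of $|B|$ --- the parameter $\delta$ cancels out --- this reduces to
\[
\frac{|B|^{\,1-m+(\beta-\tilde\delta)/n}}{w^{-1}(B)}\;\prod_{i=1}^{m}\Big(\int_B v_i^{-p_i'}\Big)^{1/p_i'}\le C
\]
for every ball $B$, the quantities involved being positive and finite.

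The second ingredient is the standing hypothesis $\xi=p/(mp-1)>1$. Since $1/\xi=(mp-1)/p=m-\tfrac1p=\sum_{i=1}^m\tfrac1{p_i'}$, the exponents $p_1',\dots,p_m',\xi'$ form a Hölder-conjugate system, that is $\sum_{i=1}^m\tfrac1{p_i'}+\tfrac1{\xi'}=1$ with $\tfrac1{\xi'}=1-m+\tfrac1p>0$; it is exactly the assumption $\xi>1$ that makes $\xi'$ a legitimate exponent at this point. I would then apply Hölder's inequality on $B$, with exponents $p_1',\dots,p_m',\xi'$, to $w^{-1}=\prod_{i=1}^m v_i^{-1}$ (recall that in the one--weight situation $w=\prod_{i=1}^m v_i$), obtaining
\[
w^{-1}(B)=\int_B\prod_{i=1}^m v_i^{-1}\le |B|^{1/\xi'}\prod_{i=1}^{m}\Big(\int_B v_i^{-p_i'}\Big)^{1/p_i'},
\]
equivalently $\prod_{i=1}^{m}\big(\int_B v_i^{-p_i'}\big)^{1/p_i'}\ge |B|^{-1/\xi'}\,w^{-1}(B)$.

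Finally, substituting this lower bound into the inequality from the first step makes $w^{-1}(B)$ cancel and leaves $|B|^{(\beta-\tilde\delta)/n-1/p}\le C$ for every ball $B$ (here I use $\tfrac1{\xi'}=1-m+\tfrac1p$). Since $|B|$ ranges over all of $(0,\infty)$, letting $|B|\to\infty$ and $|B|\to0$ forces the exponent to be zero, that is $(\beta-\tilde\delta)/n=1/p$, which is precisely $\tilde\delta=\beta-n/p$. The argument involves no delicate estimates; the main points to get right are the exponent bookkeeping in the localization step (confirming in particular that $\delta$ drops out and the power of $|B|$ is exactly $1-m+(\beta-\tilde\delta)/n$) and the observation that the hypothesis $\xi>1$ is precisely the regime in which $p_1',\dots,p_m',\xi'$ are Hölder conjugate, so that without it the scheme fails. (As usual one assumes $v_i^{-p_i'}\in L^1_{\mathrm{loc}}$, so that the integrals above are finite.)
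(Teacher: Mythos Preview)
Your argument is correct and follows essentially the same route as the paper's proof: both start from the localized inequality (the paper's condition~\eqref{eq: condicion local}) and then use H\"older with the relation $\sum_i 1/p_i'=1/\xi$ to compare $\prod_i\|v_i^{-1}\mathcal X_B\|_{p_i'}$ against $w^{-1}(B)=\int_B\prod_i v_i^{-1}$, leaving only a power of $|B|$ that must vanish. The only cosmetic difference is that the paper splits this into two steps---first H\"older with exponents $p_i'/\xi$ to reach $\big(|B|^{-1}\int_B(\prod_i v_i^{-1})^\xi\big)^{1/\xi}$, then Jensen (using $\xi>1$) to drop to $|B|^{-1}\int_B\prod_i v_i^{-1}$---whereas you apply a single H\"older with exponents $p_1',\dots,p_m',\xi'$ directly; the net effect and the resulting inequality $|B|^{(\beta-\tilde\delta)/n-1/p}\le C$ are identical.
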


The theorem above establishes that the pairs of related weights must occur in the border line $\tilde\delta=\beta-n/p$ of the regions depicted on page~\pageref{pag: regiones de pesos no triviales}. Under this assumption,  a consequence of this result is that the weight $w^{-1}=\prod_{i=1}^mv_i^{-1}$ belongs to $\text{RH}_\xi$, provided that $\xi=p/(mp-1)>1$, as states the following corollary.

\begin{coro}
	Let $0<\beta<mn$,  $\vec{p}$ a vector of exponents and $\tilde\delta=\beta-n/p$. If $\vec{v}\in \mathcal{H}_m(\vec{p},\beta,\tilde\delta)$ and $\xi=~p/(mp-1)>1$, then we have that $\prod_{i=1}^mv_i^{-1}\in \mathrm{RH}_\xi$.
\end{coro}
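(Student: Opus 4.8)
The plan is to extract from $\vec v\in\mathcal H_m(\vec p,\beta,\tilde\delta)$ a purely local estimate for the product $\prod_{i=1}^m\bigl(\int_B v_i^{-p_i'}\bigr)^{1/p_i'}$ and then to collapse that product onto a single integral of $u:=\prod_{i=1}^m v_i^{-1}=w^{-1}$ by a generalized Hölder inequality whose exponents are dictated precisely by the value $\xi=p/(mp-1)=(m-1/p)^{-1}$; the borderline identity $\tilde\delta=\beta-n/p$ will be exactly what makes the leftover power of $|B|$ come out right.

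First I would localize the defining inequality of the class. Fixing a ball $B=B(x_B,R)$ and discarding from each integral in the definition of $\mathcal H_m(\vec p,\beta,\tilde\delta)$ all of $\mathbb R^n$ except $B$, where $|B|^{1/n}+|x_B-y|\approx|B|^{1/n}$, one obtains
\[
\left(\int_{\mathbb R^n}\frac{v_i^{-p_i'}}{(|B|^{1/n}+|x_B-y|)^{(n-\beta_i+\delta/m)p_i'}}\,dy\right)^{1/p_i'}\gtrsim |B|^{-(n-\beta_i+\delta/m)/n}\left(\int_B v_i^{-p_i'}\right)^{1/p_i'}.
\]
Since $\sum_{i=1}^m(n-\beta_i+\delta/m)=mn-\beta+\delta$, plugging this into the $\mathcal H_m$ bound makes the auxiliary parameter $\delta$ cancel and leaves, with a constant independent of $B$,
\[
\prod_{i=1}^m\left(\int_B v_i^{-p_i'}\right)^{1/p_i'}\lesssim\left(\int_B u\right)|B|^{m-1-(\beta-\tilde\delta)/n}.
\]

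Next I would apply Hölder's inequality on $B$ with the exponents $q_i=p_i'/\xi$, $1\le i\le m$. These are admissible: on the one hand $\sum_i 1/q_i=\xi\sum_i 1/p_i'=\xi(m-1/p)=1$; on the other hand, since each summand of $\sum_j 1/p_j'=1/\xi$ is at most the whole (nonnegative) sum, one has $1/p_i'\le 1/\xi$, i.e. $q_i\ge1$ for every $i$ (the endpoint cases $p_i\in\{1,\infty\}$ being handled with the obvious essential–supremum modifications). As $(v_i^{-\xi})^{q_i}=v_i^{-p_i'}$, this gives
\[
\int_B u^{\xi}=\int_B\prod_{i=1}^m v_i^{-\xi}\le\prod_{i=1}^m\left(\int_B v_i^{-p_i'}\right)^{\xi/p_i'}=\left(\prod_{i=1}^m\left(\int_B v_i^{-p_i'}\right)^{1/p_i'}\right)^{\xi}.
\]

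The final step is to combine the two displays: raising the first to the power $\xi$ and using the borderline relation $\tilde\delta=\beta-n/p$, so that $(\beta-\tilde\delta)/n=1/p$ and therefore $\xi\bigl(m-1-(\beta-\tilde\delta)/n\bigr)=\xi(m-1/p)-\xi=1-\xi$, one arrives at
\[
\int_B u^{\xi}\lesssim\left(\int_B u\right)^{\xi}|B|^{1-\xi}.
\]
Dividing by $|B|$ and taking $\xi$-th roots turns this into $\bigl(\frac1{|B|}\int_B u^{\xi}\bigr)^{1/\xi}\lesssim\frac1{|B|}\int_B u$ for every ball $B$, which is precisely the assertion $\prod_{i=1}^m v_i^{-1}\in\mathrm{RH}_\xi$. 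The whole argument is little more than bookkeeping of powers of $|B|$; the one point that needs care is checking that the Hölder exponents $q_i=p_i'/\xi$ are $\ge1$ with reciprocals summing to $1$ and interpreting them correctly when $p_i\in\{1,\infty\}$, and I do not anticipate a genuine obstacle — indeed, by Theorem~\ref{teo: consecuencia de pesos iguales} the hypothesis $\tilde\delta=\beta-n/p$ is automatically in force once $\xi>1$, so the corollary is really just the computation above read off at that borderline value.
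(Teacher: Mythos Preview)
Your argument is correct and is essentially the same as the paper's: the corollary is an immediate byproduct of the computation carried out in the proof of Theorem~\ref{teo: consecuencia de pesos iguales}, where the local condition~\eqref{eq: condicion local} combined with H\"older's inequality for the exponents $p_i'/\xi$ yields
\[
|B|^{-\tilde\delta/n+\beta/n-1/p}\left(\frac{1}{|B|}\int_B \Bigl(\prod_{i=1}^m v_i^{-1}\Bigr)^\xi\right)^{1/\xi}\le \frac{C}{|B|}\int_B \prod_{i=1}^m v_i^{-1},
\]
and setting $\tilde\delta=\beta-n/p$ makes the power of $|B|$ vanish, which is exactly the $\mathrm{RH}_\xi$ condition. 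Your version merely tracks un-normalized integrals and normalizes at the end; apart from this cosmetic difference the two arguments coincide.
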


When $m=1$, this result establishes that $v^{-1}\in \mathrm{RH}_{p'}$, which was  proved in \cite{HSV}.

The remainder of the article is organized as follows. In Section~\ref{seccion: preliminares} we give the main definitions required in the sequel. The proof of our main results are contained in Section~\ref{seccion: conmutador suma} and~\ref{seccion: conmutador producto}, respectively. Finally, in Section~\ref{seccion: clase de pesos} we prove some properties of the class $\mathcal{H}_m(\vec{p},\beta,\tilde\delta)$ and exhibit the optimality of the associated parameters. 

\section{Preliminaries and definitions}\label{seccion: preliminares}

In the sequel $C$ shall denote an absolute positive constant that may change in every occurrence. By $A\lesssim B$ we mean that $A\leq C B$.  We say that $A\approx B$ when $A\lesssim B$ and $B\lesssim A$. 

By $m\in \mathbb{N}$ we denote the multilinear parameter involved in our estimates. Given a set $E$, $E^m$ shall denote the cartesian product of $E$ $m$ times. We shall be dealing with operators given by the expression
\begin{equation}\label{eq: operador T_alpha^m}
    T_{\alpha}^m\vec{f}(x)=\int_{(\mathbb{R}^n)^m}K_\alpha(x,\vec{y})\prod_{i=1}^mf_i(y_i)\,d\vec{y},
\end{equation}
for $0<\alpha<mn$, 
where $\vec{f}=(f_1,\dots,f_m)$, $\vec{y}=(y_1,\dots,y_m)$ and $K_\alpha$ is a kernel satisfying the size condition 
\begin{equation}\label{eq: condicion de tamaño}
    |K_\alpha(x,\vec{y})|\lesssim \frac{1}{(\sum_{i=1}^m|x-y_i|)^{mn-\alpha}}
\end{equation}
and an additional smoothness condition given by
\begin{equation}\label{eq: condicion de suavidad}
    |K_\alpha(x,\vec{y})-K_\alpha(x',\vec{y})|\lesssim \frac{|x-x'|^\gamma}{(\sum_{i=1}^m|x-y_i|)^{mn-\alpha+\gamma}},
\end{equation}
for some $0<\gamma\leq 1$, whenever $\sum_{i=1}^m|x-y_i|>c|x-x'|$, where $c$ is a fixed positive constant. It is easy to verify that the kernel $K_\alpha(x,\vec{y})=(\sum_{i=1}^m |x-y_i|)^{\alpha-mn}$ satisfy both conditions \eqref{eq: condicion de tamaño} and \eqref{eq: condicion de suavidad}. In this particular case we have $T_\alpha^m=I_\alpha^m$. 

Let us now introduce the two versions of commutators for the operators given above. Recall that if $T$ is a linear operator and  $b\in L^{1}_{\mathrm{loc}}$, the classical commutator $T_b$ or $[b,T]$ is formally given by the expression
\[[b,T]f=bTf-T(bf).\]
When we work on the multilinear setting, it is necessary to specify how we perform the commutation. If $b\in L^{1}_{\mathrm{loc}}$, $T$ is a multilinear operator and $\vec{f}=(f_1,f_2\dots,f_m)$ we write 
\[[b,T]_j(\vec{f})=bT(\vec{f})-T((f_1,\dots,bf_j,\dots,f_m)),\]
that is, $[b,T]_j$ is obtained by commuting $b$ with the $j$-th entry of $\vec{f}$. 

The first version of the commutator is defined as follows. Given an $m$-tuple $\bm{b}=(b_1,\dots,b_m)$, with $b_i\in  L^{1}_{\mathrm{loc}}$ for every $i$, we define the multilinear commutator of $T_{\alpha}^m$ by the expression
\[T_{\alpha,\bm{b}}^m\vec{f}(x)=\sum_{j=1}^m T_{\alpha,b_j}^m\vec{f}(x),\]
where
\[T_{\alpha,b_j}^m\vec{f}(x)=[b_j, T_{\alpha}^m]_{j}\vec f(x).\]
 
As a consequence of \eqref{eq: operador T_alpha^m} it is not difficult to see that
\begin{equation}\label{eq: representacion integral de T_a,bj^m}
T_{\alpha,b_j}^m\vec{f}(x)=\int_{(\mathbb{R}^n)^m}(b_j(x)-b_j(y_j))K_\alpha(x,\vec{y})\prod_{i=1}^m f_i(y_i)\,d\vec{y}.
\end{equation}

We now introduce the second type of commutator of $T_{\alpha}^m$. The
 multilinear product commutator  $\mathcal{T}_{\alpha,\bm{b}}^m$ is defined iteratively as follows
 \[\mathcal{T}_{\alpha,\bm{b}}^m\vec{f} = [b_m,\dots [b_{2},[b_1, T_\alpha^m]_1]_{2}\dots ]_m\vec{f}.\]
 It is difficult to use the expression above in practice. The next proposition, proved in \cite{BPRe}, gives a pointwise representation for the commutator by means of the operator $T_\alpha^m$.
Before stating the proposition, we shall include a notation that will be useful in the sequel.

 Let $S_m=\{0,1\}^m$. Given a set $B$ and $\sigma=(\sigma_1,\sigma_2,\dots,\sigma_m)\in S_m$, we define 
	\[B^{\sigma_i}=\left\{
	\begin{array}{ccl}
	B,&\textrm{ if }&\sigma_i=1\\
	\mathbb{R}^n\backslash B,&\textrm{ if }&\sigma_i=0.
	\end{array}
	\right.\]
	With the notation $\bm{B}^\sigma$ we will understand the cartesian product $B^{\sigma_1}\times B^{\sigma_2}\times\dots\times B^{\sigma_m}$. 
	
	Given $\sigma=(\sigma_1,\sigma_2,\dots,\sigma_m)$ we also define
	\[\bar{\sigma}_i=\left\{\begin{array}{ccl}
	   1  & \mathrm{ if } & \sigma_i=0;  \\
	   0  & \mathrm{ if } & \sigma_i=1,
	\end{array}
	\right.
	\]
	for every $1\leq i\leq m$ and $|\sigma|=\sum_{i=1}^m\sigma_i$. We shall write $\sigma=\bm{1}$ when $\sigma_i=1$ for every $i$ and $\sigma=\bm{0}$ when every $\sigma_i=0$. 

\begin{propo}\label{Integral Representation}
Let $T_\alpha^m$ be a multilinear operator as in \eqref{eq: operador T_alpha^m} and $\bm{b}=(b_1,\dots,b_m)$ where $b_i\in L^1_{\mathrm{loc}}$ for $1\leq i\leq m$. Then we have that
\[\mathcal{T}_{\alpha,\bm{b}}^m\vec{f}(x)=\sum_{\sigma\in S_m} (-1)^{m-|\sigma|}\left(\prod_{i=1}^m b_i^{\sigma_i}(x)\right)T_{\alpha}^m(f_1b_1^{\bar\sigma_1},\dots,f_m b_m^{\bar\sigma_m})(x).\]
Furthermore, we have the integral representation
\[\mathcal{T}_{\alpha,\bm{b}}^m\vec{f}(x)=\int_{(\mathbb{R}^n)^m}K_\alpha(x,\vec{y})\prod_{i=1}^m(b_i(x)-b_i(y_i))f_i(y_i)\,d\vec{y}.\]
\end{propo}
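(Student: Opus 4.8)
The plan is to obtain the integral representation first, by induction on the multilinear parameter $m$, tracking how each successive commutation modifies the kernel, and then to deduce the pointwise representation from it by a purely algebraic expansion.

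For $0\leq k\leq m$ I would introduce the partial iterated commutator
\[T^{(k)}\vec{f} = [b_k,[b_{k-1},\dots[b_1,T_\alpha^m]_1\dots]_{k-1}]_k\vec{f},\]
so that $T^{(0)}=T_\alpha^m$ and $T^{(m)}=\mathcal{T}_{\alpha,\bm{b}}^m$, and prove by induction on $k$ that
\[T^{(k)}\vec{f}(x)=\int_{(\mathbb{R}^n)^m}K_\alpha(x,\vec{y})\prod_{i=1}^k\big(b_i(x)-b_i(y_i)\big)\prod_{i=1}^m f_i(y_i)\,d\vec{y}.\]
The case $k=0$ is exactly \eqref{eq: operador T_alpha^m}. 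For the inductive step one writes, from the definition of the commutation on the $k$-th entry,
\[T^{(k)}\vec{f}(x)=b_k(x)\,T^{(k-1)}\vec{f}(x)-T^{(k-1)}(f_1,\dots,b_kf_k,\dots,f_m)(x),\]
substitutes the inductive hypothesis in both terms, and notices that the first term inserts the factor $b_k(x)$, which is independent of $\vec{y}$, whereas the second inserts the factor $b_k(y_k)$ inside the integral; their difference supplies precisely the missing factor $b_k(x)-b_k(y_k)$. Taking $k=m$ yields the claimed integral representation (the case $k=1$ being \eqref{eq: representacion integral de T_a,bj^m}). The only manipulation that needs justification is moving $b_k(x)$ in and out of the integrals, which is legitimate once the integrals are known to converge absolutely; this follows from the size condition \eqref{eq: condicion de tamaño} together with the hypotheses in force in our applications.

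To pass to the pointwise representation I would expand, for fixed $x$ and $\vec{y}$,
\[\prod_{i=1}^m\big(b_i(x)-b_i(y_i)\big)=\sum_{\sigma\in S_m}(-1)^{m-|\sigma|}\Big(\prod_{i=1}^m b_i^{\sigma_i}(x)\Big)\prod_{i=1}^m b_i(y_i)^{\bar\sigma_i},\]
with the convention that $b_i(y_i)^{\bar\sigma_i}$ equals $b_i(y_i)$ when $\sigma_i=0$ and equals $1$ when $\sigma_i=1$, then substitute this into the integral representation, interchange the finite sum with the integral, and pull the factors $b_i^{\sigma_i}(x)$ outside. Each resulting integral is $T_\alpha^m(f_1b_1^{\bar\sigma_1},\dots,f_mb_m^{\bar\sigma_m})(x)$ by \eqref{eq: operador T_alpha^m}, which is the desired formula.

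There is no deep obstacle here: the argument is essentially bookkeeping, and the only points that require care are the consistent handling of the exponents $\sigma_i$ and $\bar\sigma_i$ in the expansion and the absolute convergence needed both in the induction and to interchange the finite sum with the integral. An equally valid route would be to establish the pointwise representation first, directly by induction on $m$ from the recursive definition of $\mathcal{T}_{\alpha,\bm{b}}^m$, keeping careful track of the signs $(-1)^{m-|\sigma|}$ and of the symbol powers, and then to recover the integral representation by reversing the expansion above.
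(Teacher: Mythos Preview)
Your argument is correct. The induction on the number of commutations cleanly yields the integral representation, and the binomial-type expansion of $\prod_{i=1}^m(b_i(x)-b_i(y_i))$ with the sign bookkeeping $(-1)^{m-|\sigma|}$ is exactly right; pulling the $x$-dependent factors outside and recognizing the remaining integral as $T_\alpha^m(f_1b_1^{\bar\sigma_1},\dots,f_mb_m^{\bar\sigma_m})(x)$ gives the pointwise formula.

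As for comparison: the present paper does not actually prove this proposition; it is quoted from \cite{BPRe} (see the sentence ``The next proposition, proved in \cite{BPRe}\dots'' preceding the statement). So there is no in-paper proof to match against. Your route---first the integral representation by induction, then the algebraic expansion to get the pointwise sum---is the natural one and is essentially the argument one would expect in \cite{BPRe}; the alternative route you mention (pointwise formula first, by induction on $m$, then collapse to the integral form) is equally standard, and the two are interchangeable since the passage between them is the purely algebraic identity you wrote down. The only caveat you already flagged, absolute convergence to justify pulling scalars in and out and swapping the finite sum with the integral, is routine under the standing assumptions.
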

 
Regarding the symbols, we shall be dealing with the $\Lambda(\delta)$ Lipschitz spaces given, for $0<\delta<1$, by the collection of functions $b$ verifying 
\[|b(x)-b(y)|\le C|x-y|^{\delta}.\]
We denote by $\|b\|_{\Lambda(\delta)}$ the smallest constant for the inequality above to hold. If $\bm{b}=(b_1,\dots,b_m)$, with $b_i\in \Lambda(\delta)$ for every $1\leq i\leq m$, we define
	\[\|\bm{b}\|_{(\Lambda(\delta))^m}=\max_{1\leq i\leq m}\|b_i\|_{\Lambda(\delta)}.\]

We shall now turn to the definition of the classes of weights related to the main results. By a weight $w$ we understand a locally integrable function such that $0<w(x)<\infty$ for almost every $x$.
	 Let $\delta, \beta$ and $\tilde\delta$ be fixed real constants.  Given $w$, $\vec{v}=(v_1,v_2,\dots,v_m)$ and a vector of exponents $\vec{p}$, we say that $(w,\vec{v})\in \mathcal{H}_m(\vec{p}, \beta,\tilde\delta)$ if there exists a positive constant $C$ such that the inequality
	\begin{equation}\label{eq: condicion H_m(p,beta,delta tilde)-cal}
	    \frac{|B|^{1+(\delta-\tilde\delta)/n}}{w^{-1}(B)}\prod_{i=1}^m\left\|\frac{v_i^{-1}}{(|B|^{1/n}+|x_B-\cdot|)^{(n-\beta_i+\delta/m)}}\right\|_{p_i'}\leq C
	\end{equation}
	holds for every ball $B=B(x_B,R)$. We have $0<\beta_i<n$ for every $i$, $\sum_{i=1}^m \beta_i=\beta$ and this implies that $0<\beta<mn$. 
 
 Although the parameters $\delta$, $\tilde\delta$ and $\beta$ are independent to each other on this definition, they involve a relation that describes an area in which we can find nontrivial examples of pairs for this class (see Section~\ref{seccion: clase de pesos}).
 
We shall deal with different consequences of the inequality above throughout the article. We can derive them from the following one. Fix $\sigma\in S_m$, $\lambda>0$ and a ball $B=B(x_B,R)$. We shall bound every factor in the following way: when $\sigma_i=0$ we compute the norm in $\mathbb{R}^n\backslash \lambda B$ and for $\sigma_i=1$ we perform it in $\lambda B$. We get that condition \eqref{eq: condicion H_m(p,beta,delta tilde)-cal} implies
\begin{equation}\label{eq: condicion mezclada general}
\frac{|B|^{1+(\delta-\tilde\delta)/n-\theta(\sigma)}}{w^{-1}(B)}\prod_{i: \sigma_i=1} \left\|v_i^{-1}\mathcal{X}_{\lambda B}\right\|_{p_i'}\prod_{i: \sigma_i=0}\left\|\frac{v_i^{-1}\mathcal{X}_{\mathbb{R}^n\backslash \lambda B}}{|x_B-\cdot|^{n-\beta_i+\delta/m}}\right\|_{p_i'}\leq C_\lambda,
\end{equation}
where $\theta(\sigma)=\sum_{i: \sigma_i=1}(1-\beta_i/n+\delta/(mn))$. 

 From now on it will be useful to separate the coordinates of $\vec{p}$ in two sets. We define the sets $\mathcal{I}_1=\{1\leq i\leq m: p_i=1\}$ and $\mathcal{I}_2=\{1\leq i\leq m: 1<p_i\leq \infty\}$. By choosing $\sigma=\bm{1}$ and $\lambda=1$ in \eqref{eq: condicion mezclada general} we obtain
	\begin{equation}\label{eq: condicion local}
	     \frac{|B|^{1-\tilde\delta/n+\beta/n-1/p}}{w^{-1}(B)}\prod_{i\in\mathcal{I}_1}\|v_i^{-1}\mathcal{X}_B\|_\infty\prod_{i\in\mathcal{I}_2}\left(\frac{1}{|B|}\int_B v_i^{-p_i'}\right)^{1/p_i'}\leq C
	\end{equation}
 whilst if we take $\sigma=\bm{0}$ and $\lambda=1$, \eqref{eq: condicion mezclada general} becomes
	\begin{equation}\label{eq: condicion global}
	    \frac{|B|^{1+(\delta-\tilde\delta)/n}}{w^{-1}(B)}\prod_{i\in\mathcal{I}_1}\left\|\frac{v_i^{-1}\mathcal{X}_{\mathbb{R}^n\backslash B}}{|x_B-\cdot|^{n-\beta_i+\delta/m}}\right\|_\infty \,\prod_{i\in\mathcal{I}_2}\left(\int_{\mathbb{R}^n\backslash B}\frac{v_i^{-p_i'}}{|x_B-\cdot|^{(n-\beta_i+\delta/m)p_i'}}\right)^{1/p_i'}\leq C,
	\end{equation}
 for every ball $B$.
	We shall refer to these two last inequalities as the local and global conditions for $\mathcal{H}_m(\vec{p},\beta,\tilde\delta)$, respectively. 

We finish this section with the definition of reverse Hölder classes. Recall that a weight $w$ belongs to $\mathrm{RH}_s$, $1<s<\infty$, if there exists a positive constant $C$ such that the inequality
\[\left(\frac{1}{|B|}\int_B w^s\right)^{1/s}\leq \frac{C}{|B|}\int_B w\]
holds for every ball $B$ in $\mathbb{R}^n$.  The $\mathrm{RH}_{\infty}$ class is defined by the collection of weights $w$ for which  
\[\sup_B w\le  \frac{C}{|B|}\int_B w,\]
for some positive constant $C$. It is easy to see that $\mathrm{RH}_\infty\subset \mathrm{RH}_t\subset \mathrm{RH}_s$ whenever $1<s<t$. 	

The smallest constant for which the corresponding inequalities above hold is usually denoted by $[w]_{\mathrm{RH}_s}$, $1< s\leq \infty$.

\begin{obs}\label{obs: potencias positivas en RH inf}
It is not difficult to check that $w(x)=|x|^{\alpha}$ is an $\mathrm{RH}_\infty$ weight provided $\alpha\geq 0$. This property will be useful on the forthcoming estimates. 
\end{obs}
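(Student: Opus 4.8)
The plan is to verify directly the defining inequality $\sup_B w\le C|B|^{-1}\int_B w$ for $w(x)=|x|^{\alpha}$ with $\alpha\ge 0$, exhibiting a constant $C$ that depends only on $n$ and $\alpha$. Fix a ball $B=B(x_B,r)$. Since $t\mapsto t^{\alpha}$ is nondecreasing on $[0,\infty)$, we have $\sup_{x\in B}w(x)=(|x_B|+r)^{\alpha}$, so everything reduces to bounding the average $|B|^{-1}\int_B|x|^{\alpha}\,dx$ from below by a fixed multiple of $(|x_B|+r)^{\alpha}$. I would split into two cases according to whether the ball is far from or close to the origin.

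\textbf{Case 1: $|x_B|\ge 2r$.} Then for every $x\in B$ one has $|x|\ge |x_B|-r\ge (|x_B|+r)/3$ (the last inequality being equivalent to $|x_B|\ge 2r$), hence $|x|^{\alpha}\ge 3^{-\alpha}(|x_B|+r)^{\alpha}$ pointwise on $B$; averaging over $B$ gives $|B|^{-1}\int_B|x|^{\alpha}\,dx\ge 3^{-\alpha}\sup_B w$.

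\textbf{Case 2: $|x_B|<2r$.} Then $B\subset B(0,3r)$, so $\sup_B w\le (3r)^{\alpha}$. On the other hand, the portion of $B$ where $|x|\le r/2$ is contained in $B(0,r/2)$, whose measure is $2^{-n}|B|$; therefore $|\{x\in B:|x|>r/2\}|\ge (1-2^{-n})|B|\ge |B|/2$, and on that set $|x|^{\alpha}>(r/2)^{\alpha}$. Consequently $|B|^{-1}\int_B|x|^{\alpha}\,dx\ge 2^{-\alpha-1}r^{\alpha}\ge 2^{-\alpha-1}3^{-\alpha}\sup_B w$.

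Combining the two cases yields $\sup_B w\le 2^{\alpha+1}3^{\alpha}\,|B|^{-1}\int_B w$ for every ball $B$, which is precisely $w\in\mathrm{RH}_{\infty}$ with $[w]_{\mathrm{RH}_{\infty}}\le 2^{\alpha+1}3^{\alpha}$. There is no real obstacle here; the only point that needs a moment's care is Case 2, namely that balls containing (or nearly containing) the origin have $\inf_{x\in B}|x|=0$, so a pointwise lower bound fails, yet the $|x|^{\alpha}$-average remains comparable to $r^{\alpha}$ thanks to the elementary volume estimate on $\{x\in B:|x|>r/2\}$.
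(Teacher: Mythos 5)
Your verification is correct: both cases are handled properly (the inequality $|x_B|-r\ge(|x_B|+r)/3$ is indeed equivalent to $|x_B|\ge 2r$, and the volume estimate $|\{x\in B:|x|>r/2\}|\ge(1-2^{-n})|B|$ is what saves the case of balls near the origin), and it yields an explicit constant $[w]_{\mathrm{RH}_\infty}\le 2^{\alpha+1}3^{\alpha}$. The paper offers no proof of this remark, so your argument simply supplies the omitted routine check; note that it is in essence the lower-bound half of Lemma~\ref{lema: estimacion de la integral de |x|^a en una bola}, namely $\int_B|x|^{\alpha}\,dx\gtrsim R^n\left(\max\{R,|x_B|\}\right)^{\alpha}$, so one could alternatively deduce the remark directly from that lemma together with $\sup_B|x|^{\alpha}\le(|x_B|+R)^{\alpha}\lesssim\left(\max\{R,|x_B|\}\right)^{\alpha}$.
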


\section{Proof of Theorem~\ref{teo: acotacion Lp Lipschitz para T_alpha,b (suma)}}\label{seccion: conmutador suma}

We shall need a technical lemma that will be useful to deal with a local estimate on the proof of Theorem~\ref{teo: acotacion Lp Lipschitz para T_alpha,b (suma)}. A similar version of this result with different parameters was already proved in \cite{BPR22(2)} (see also \cite{BPR22}). We include the complete proof for the sake of completeness.

\begin{lema}\label{lema: acotacion local de Ialpha,m}
Let $0<\alpha<mn$, $0<\delta<mn-\alpha$, $\tilde\alpha=\alpha+\delta$ and $\vec{p}$ such that $p>n/\tilde\alpha$. Let $\tilde\delta\leq \delta$ and  $(w,\vec{v})$ be a pair of weights belonging  to the class $\mathcal{H}_m(\vec{p},\tilde\alpha,\tilde\delta)$ such that $v_i^{-p_i'}\in\mathrm{RH}_m$ for every $i\in\mathcal{I}_2$. Then there exists $C>0$ such that for every ball $B$ and every $\vec{f}$ such that $f_iv_i\in L^{p_i}$, $1\leq i\leq m$, we have that
\[\int_B |I_{\tilde\alpha}^m\vec{g}(x)|\,dx\leq C|B|^{\tilde\delta/n}w^{-1}(B)\prod_{i=1}^m\|f_iv_i\|_{p_i},\]
where $\vec{g}=\mathcal{X}_{2B}\vec{f}=(\mathcal{X}_{2B}f_1,...,\mathcal{X}_{2B}f_m).$ 
\end{lema}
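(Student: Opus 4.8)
The plan is to estimate the inner integral $\int_B |I_{\tilde\alpha}^m\vec g(x)|\,dx$ by first dominating $I_{\tilde\alpha}^m\vec g(x)$ pointwise on $B$ in terms of a product of fractional integrals of $|f_i|\mathcal X_{2B}$. Since all the functions $g_i$ are supported on $2B$, for $x\in B$ and $y_i\in 2B$ we have $\sum_{i}|x-y_i|\lesssim |B|^{1/n}$, so by the size condition \eqref{eq: condicion de tamaño} it is natural to split the region of integration $(2B)^m$ dyadically: write $2B\setminus\{x\}=\bigcup_{k\ge 0} C_k(x)$ with $C_k(x)=\{y: 2^{-k-1}|B|^{1/n}<|x-y|\le 2^{-k}|B|^{1/n}\}\cap 2B$, and bound the kernel on the product of annuli by $(\max_i 2^{-k_i})^{\alpha+\delta - mn}|B|^{(\alpha+\delta)/n-m}$ up to constants. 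Summing the geometric series in the $k_i$ (here is where $\tilde\alpha=\alpha+\delta>0$ and in fact $\delta<mn-\alpha$ guarantees convergence) yields the pointwise bound
\[
|I_{\tilde\alpha}^m\vec g(x)|\lesssim \prod_{i=1}^m \frac{1}{|B|^{1-(\beta_i/n) + \delta/(mn)}}\int_{2B}\frac{|f_i(y_i)|}{|x-y_i|^{n-\beta_i+\delta/m}}\,dy_i
\]
after distributing $\tilde\alpha = \sum_i(\beta_i - \delta/m)$ — in other words, $|I_{\tilde\alpha}^m\vec g(x)|\lesssim |B|^{(\delta-1\cdot 0)/n}\prod_i |B|^{\beta_i/n - 1 - \delta/(mn)}\, (\text{local fractional integral of }|f_i|)$; the exponent bookkeeping should be carried out carefully but is routine.

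**Next step.** Integrate this pointwise bound over $x\in B$ and apply Fubini. For each $i\in\mathcal I_2$, use Hölder's inequality with exponents $p_i$ and $p_i'$ (in the $y_i$ variable and, after Fubini, in $x$ as well, via the standard Minkowski/Hölder argument for fractional-integral-type kernels) to pull out $\|f_iv_i\|_{p_i}$ and leave a factor
\[
\left(\int_B\int_{2B}\frac{v_i(y_i)^{-p_i'}}{|x-y_i|^{(n-\beta_i+\delta/m)p_i'}}\,dy_i\,dx\right)^{1/p_i'}.
\]
For $i\in\mathcal I_1$ one instead pulls out $\|f_iv_i\|_\infty$ and $\|v_i^{-1}\mathcal X_{2B}\|_\infty$. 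The reverse Hölder hypothesis $v_i^{-p_i'}\in\mathrm{RH}_m$ enters precisely here: after integrating in $x$ over $B$ the singular kernel is integrable and produces a factor $\approx |B|^{(\delta/m - \beta_i)(p_i'-?)/n}\cdot\int_{2B} v_i^{-p_i'}$ type estimate, but to reassemble the product over all $i$ into the average $\bigl(\frac1{|B|}\int_{2B} v_i^{-p_i'}\bigr)^{1/p_i'}$ and then invoke the local condition \eqref{eq: condicion local} for $\mathcal H_m(\vec p,\tilde\alpha,\tilde\delta)$, one needs Hölder's inequality in the form $\prod_i\int_{2B} h_i \le |B|^{m-1}\cdot\ldots$, which forces the $\mathrm{RH}_m$ control on each $v_i^{-p_i'}$ so that $\prod_i\bigl(\int_{2B}v_i^{-p_i'}\bigr)$ is comparable to $\bigl(\int_{2B}\prod_i v_i^{-p_i'/?}\bigr)$ — equivalently, $\mathrm{RH}_m$ lets us replace the product of $p_i'$-averages over $2B$ by the corresponding quantities on $B$ appearing in \eqref{eq: condicion local} without losing powers of $|B|$.

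**Conclusion and the main obstacle.** Collecting the powers of $|B|$ from the pointwise bound, from integrating the kernel in $x$, and from passing between $2B$ and $B$, the exponent should match exactly $1+\beta/n - 1/p - \tilde\delta/n$ with $\beta=\tilde\alpha$, which is the left factor in the local condition \eqref{eq: condicion local}; applying that condition collapses everything to $C|B|^{\tilde\delta/n}w^{-1}(B)\prod_i\|f_iv_i\|_{p_i}$, as desired. The hypothesis $\tilde\delta\le\delta$ is used to guarantee the relevant power of $|B|$ is nonnegative where needed (e.g.\ $|B|^{(\delta-\tilde\delta)/n}\ge$ a harmless factor on a fixed ball, or to keep the passage from $2B$ to $B$ in the right direction). \textbf{The main obstacle} I anticipate is the careful power-of-$|B|$ accounting together with the correct deployment of $\mathrm{RH}_m$: one must verify that integrating the singular kernels $|x-y_i|^{-(n-\beta_i+\delta/m)p_i'}$ over $y_i\in 2B$ and then over $x\in B$ produces exactly a constant times $|2B|^{\text{(right power)}}$ times $\bigl(\frac1{|2B|}\int_{2B}v_i^{-p_i'}\bigr)$, using $\mathrm{RH}_m$ to handle the fact that the exponent on $|x-y_i|$ may exceed $n$ (non-integrable singularity) unless the reverse Hölder inequality is invoked to trade integrability of $v_i^{-p_i'}$ against the singularity; getting the interplay between these two integrations and the $\mathrm{RH}_m$ self-improvement right — and checking it degenerates correctly to the known linear case $m=1$ where $\mathrm{RH}_1$ is vacuous — is the delicate part.
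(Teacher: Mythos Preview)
Your plan has a genuine gap at the H\"older step, and the role of $\mathrm{RH}_m$ is misidentified.

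First, the pointwise product bound you write down is not correct as stated. What \emph{is} true (and trivial, since $\sum_j|x-y_j|\ge |x-y_i|$ for each $i$) is
\[
\Bigl(\sum_{j}|x-y_j|\Bigr)^{-(mn-\tilde\alpha)}\le \prod_{i=1}^m |x-y_i|^{-(n-\tilde\alpha_i)},\qquad \sum_i\tilde\alpha_i=\tilde\alpha,\ 0<\tilde\alpha_i<n,
\]
with no extra powers of $|B|$ and with exponent $n-\tilde\alpha_i$, not $n-\beta_i+\delta/m$. More seriously, after factorising you propose to apply H\"older in $x$ with exponents $p_i'$ and obtain a factor $\bigl(\int_B\int_{2B} v_i^{-p_i'}|x-y_i|^{-(n-\beta_i+\delta/m)p_i'}\,dy_i\,dx\bigr)^{1/p_i'}$. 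This cannot come out of H\"older: for $m\ge 2$ one has $\sum_i 1/p_i'=m-1/p>1$, so the exponents $p_i'$ are not admissible, and in any case the inner kernel $|x-y_i|^{-(n-\tilde\alpha_i)p_i'}$ will typically have a non-integrable singularity. Reverse H\"older does not ``trade integrability against the singularity'' here; $\mathrm{RH}_m$ is a statement about averages of $v_i^{-p_i'}$ over balls, not about cancelling point singularities of a kernel. (Also, for $i\in\mathcal I_1$ one has $p_i=1$, so you want $\|f_iv_i\|_1$, not $\|f_iv_i\|_\infty$.)

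The paper's argument avoids this obstruction by \emph{not} factorising pointwise. Instead it first strips off the indices with $p_i=1$ and $p_i=\infty$ (using $p>n/\tilde\alpha$ to ensure $\tilde\alpha>m_1 n$), and on the remaining $m_2$ variables invokes the \emph{unweighted} multilinear boundedness $I_{\tilde\alpha_0}^{m_2}:\prod_j L^{r_j}\to L^q$ (Moen) with the specific choice $r_j=m_2p_{i_j}/(m_2-1+p_{i_j})<p_{i_j}$. The point of this choice is that H\"older gives $\|f_{i_j}\mathcal X_{2B}\|_{r_j}\le \|f_{i_j}v_{i_j}\|_{p_{i_j}}\bigl(\int_{2B} v_{i_j}^{-m_2p_{i_j}'}\bigr)^{1/(m_2p_{i_j}')}$, and \emph{this} is where $\mathrm{RH}_m\subset\mathrm{RH}_{m_2}$ enters: it converts the $m_2p_{i_j}'$--power average of $v_{i_j}^{-1}$ back to the $p_{i_j}'$--average appearing in the local condition \eqref{eq: condicion local}. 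After that, the power-of-$|B|$ bookkeeping closes exactly. Your sketch is missing this mechanism; without the multilinear $L^{r_j}\to L^q$ black box (or an equivalent device that produces the exponent $m_2p_{i_j}'$ on $v_{i_j}^{-1}$), the argument does not go through for $m\ge2$.
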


\begin{proof}
We begin by splitting  $\mathcal I_2$ into the sets $\mathcal I_2^1$ and $\mathcal I_2^2$, where
		\[\mathcal{I}_2^1=\{i\in \mathcal{I}_2 : p_i<\infty\}\quad \textrm{ and } \quad\mathcal{I}_2^2=\{i \in \mathcal{I}_2: p_i=\infty\}.\]
		Define $m_1, m_2, m_2^1$ and $m_2^2$ as the cardinal of the sets $\mathcal{I}_1$, $\mathcal{I}_2$, $\mathcal{I}_2^1$ and $\mathcal{I}_2^2$, respectively. Then $m=m_1+m_2=m_1+m_2^1+m_2^2$. If $\tilde B=2B$, for $x\in B$ we have that
		\begin{align*}
		|I_{\tilde\alpha}^m\vec{g}(x)|&\leq \int_{\tilde B^m }\frac{\prod_{i=1}^m|f_i(y_i)|}{(\sum_{i=1}^m|x-y_i|)^{mn-\tilde\alpha}}\,d\vec{y}\\
		&\leq \left(\prod_{i\in\mathcal{I}_2^2}\|f_iv_i\|_\infty\right)\int_{\tilde B^m}\frac{\prod_{i\in \mathcal{I}_1\cup\mathcal{I}_2^1}|f_i(y_i)|\prod_{i\in \mathcal{I}_2^2}v_i^{-1}(y_i)}{(\sum_{i=1}^m|x-y_i|)^{mn-\tilde\alpha}}\,d\vec{y}\\
		&\leq \left(\prod_{i\in\mathcal{I}_2^2}\|f_iv_i\|_\infty\right)\left(\prod_{i\in \mathcal{I}_1}\|f_i\mathcal{X}_{\tilde B}\|_1\right) \int_{\tilde B^{m_2}}\frac{\prod_{i\in \mathcal{I}_2^1}|f_i(y_i)|\prod_{i\in \mathcal{I}_2^2}v_i^{-1}(y_i)}{(\sum_{i\in \mathcal{I}_2}|x-y_i|)^{mn-\tilde\alpha}}\,d\vec{y}\\
        &=\left(\prod_{i\in\mathcal{I}_2^2}\|f_iv_i\|_\infty\right)\left(\prod_{i\in \mathcal{I}_1}\|f_i\mathcal{X}_{\tilde B}\|_1\right)I(x,B).
		\end{align*}
		Since $p>n/\tilde\alpha$ we have that
		\[\tilde\alpha>n\sum_{i=1}^m\frac{1}{p_i}=m_1n+\frac{n}{p^*},\]
		where $1/p^*=\sum_{i\in\mathcal{I}_2}1/p_i$. Therefore we can decompose $\tilde\alpha=\tilde\alpha^1+\tilde\alpha^2$, such that  $\tilde\alpha^1>m_1n$ and $\tilde\alpha^2>n/p^*$. Then we get
		\[mn-\tilde\alpha=m_2n-\tilde\alpha^2+m_1n-\tilde\alpha^1.\]
		Let us sort the sets $\mathcal{I}_2^1$ and $\mathcal{I}_2^2$ increasingly, so
		\[\mathcal{I}_2^1=\left\{i_1,i_2,\dots,i_{m_2^1}\right\} \quad \textrm{ and } \quad \mathcal{I}_2^2=\left\{i_{m_2^1+1},i_{m_2^1+2},\dots,i_{m_2}\right\},\]
		and define $\vec{h}=(h_1,\dots,h_{m_2})$, where
		\[h_j=\left\{\begin{array}{ccl}
		|f_{i_j}|&\textrm{ if }&1\leq j\leq m_2^1;\\
		v_{i_j}^{-1}&\textrm{ if }&m_2^1+1\leq j\leq m_2.
		\end{array}
		\right.
		\]
		Then we can proceed in the following way
		\begin{align*}
		I(x,B)&\lesssim \int_{\tilde B^{m_2}}\frac{\prod_{i\in \mathcal{I}_2^1}|f_i(y_i)|\prod_{i\in \mathcal{I}_2^2}v_i^{-1}(y_i){(\sum_{i\in \mathcal{I}_2}|x-y_i|)^{{\tilde\alpha^1-m_1n}}}}{(\sum_{i\in \mathcal{I}_2}|x-y_i|)^{m_2n-\tilde\alpha^2}}\,d\vec{y}\\
		&\lesssim |\tilde B|^{\tilde\alpha^1/n-m_1}\int_{\tilde B^{m_2}}\frac{\prod_{i\in \mathcal{I}_2^1}|f_i(y_i)|\prod_{i\in \mathcal{I}_2^2}v_i^{-1}(y_i)}{(\sum_{i\in \mathcal{I}_2}|x-y_i|)^{m_2n-\tilde\alpha^2}}\,d\vec{y}\\
		&=|\tilde B|^{\tilde\alpha^1/n-m_1}\int_{\tilde B^{m_2}}\frac{\prod_{j=1}^{m_2}h_j(y_{i_j})}{(\sum_{j=1}^{m_2}|x-y_{i_j}|)^{m_2n-\tilde\alpha^2}}\,d\vec{y}.
		\end{align*}
		Next we define the vector of exponents $\vec{r}=(r_1,\dots,r_{m_2})$ in the following way
		\[r_j=\left\{\begin{array}{ccr}
		m_2p_{i_j}/(m_2-1+p_{i_j})&\textrm{ if }&1\leq j\leq m_2^1;\\
		m_2&\textrm{ if }&m_2^1+1\leq j\leq m_2.
		\end{array}
		\right.
		\]
		With this definition we obtain
		\begin{align*}
		\frac{1}{r}&=\sum_{j=1}^{m_2}\frac{1}{r_j}=\sum_{j=1}^{m_2^1}\left(\frac{1}{m_2}+\frac{m_2-1}{m_2p_{i_j}}\right)+\sum_{j=m_2^1+1}^{m_2}\frac{1}{m_2}=\frac{m_2^1}{m_2}+\frac{m_2-1}{m_2{p^*}}+\frac{m_2^2}{m_2}=1+\frac{m_2-1}{m_2p^*}.
		\end{align*}
		
		Notice that $1/r>1/p^*$ and  also  $n/p^*<\tilde\alpha^2$ by construction. Then there exists a number $\tilde\alpha_0$ such that $n/p^*<\tilde\alpha_0<n/r$. Indeed, if $\tilde\alpha^2<n/r$ we can directly pick $\tilde\alpha_0=\tilde\alpha^2$. Otherwise $\tilde\alpha_0<\tilde\alpha^2$. Let us first consider the case $m_2\geq 2$. We set
		\[\frac{1}{q}=\frac{1}{r}-\frac{\tilde\alpha_0}{n}.\]
		Then $0<1/q<1$ since
		\[\frac{1}{r}-1=\left(1-\frac{1}{m_2}\right)\frac{1}{p^*}<\frac{1}{p^*}<\frac{\tilde\alpha_0}{n}.\]
				By using the well-known continuity property $I_{\tilde\alpha_0}^{m_2}\colon \prod_{j=1}^{m_2} L^{r_j}\to L^q$ with respect to the Lebesgue measure (see, for example, \cite{Moen09}) we get
		\begin{align*}
		\int_B I(x,B)\,dx&\lesssim |\tilde B|^{\tilde\alpha^1/n-m_1+(\tilde\alpha^2-\tilde\alpha_0)/n}\left(\int_B|I_{\tilde\alpha_0}^{m_2}(\vec{h}\mathcal{X}_{\tilde B^{m_2}})(x)|^q\,dx\right)^{1/q}|B|^{1/q'}\\
		&\lesssim |\tilde B|^{(\tilde\alpha-\tilde\alpha_0)/n-m_1+1/q'}\left(\int_{\mathbb{R}^n}|I_{\tilde\alpha_0}^{m_2}(\vec{h}\mathcal{X}_{\tilde B^{m_2}})(x)|^q\,dx\right)^{1/q}\\
		&\lesssim |\tilde B|^{(\tilde\alpha-\tilde\alpha_0)/n-m_1+1/q'}\prod_{j=1}^{m_2}\|h_j\mathcal{X}_{\tilde B}\|_{r_j}.
		\end{align*}
		Observe that $r_j<p_{i_j}$ for every $1\leq j\leq m_2^1$. Since $v_i^{-p_i'}\in \mathrm{RH}_m\subset \mathrm{RH}_{m_2}$ for every $i\in\mathcal{I}_2$, by applying H\"{o}lder inequality and then the reverse Hölder condition on these weights we get
		\begin{align*}	\prod_{j=1}^{m_2}\|h_j\mathcal{X}_{\tilde B}\|_{r_j}&=\prod_{i\in\mathcal{I}_2^1}\left(\int_{\tilde B}|f_i|^{r_i}v_i^{r_i}v_i^{-r_i}\right)^{1/r_i}\prod_{i\in \mathcal{I}_2^2}\left(\int_{\tilde B}v_i^{-m_2}\right)^{1/m_2}\\
		&\leq \prod_{i\in\mathcal{I}_2^1}\|f_iv_i\|_{p_i}\left(\int_{\tilde B} v_i^{-m_2p_i'}\right)^{1/(m_2p_i')}\prod_{i\in \mathcal{I}_2^2}\left(\int_{\tilde B}v_i^{-m_2}\right)^{1/m_2}\\
		&\leq |\tilde B|^{m_2^1/m_2-1/(m_2p^*)+m_2^2/m_2}\prod_{i\in\mathcal{I}_2^1}\left[v_i^{-p_i'}\right]_{\mathrm{RH}_{m_2}}\|f_iv_i\|_{p_i}\left(\frac{1}{|\tilde B|}\int_{\tilde B} v_i^{-p_i'}\right)^{1/p_i'}\\
		&\quad \times \prod_{i\in \mathcal{I}_2^2}\left[v_i^{-1}\right]_{\mathrm{RH}_{m_2}}\left(\frac{1}{|\tilde B|}\int_{\tilde B}v_i^{-1}\right)\\
		&\lesssim |\tilde B|^{1-1/(m_2p^*)}\prod_{i\in\mathcal{I}_2^1}\|f_iv_i\|_{p_i}\left(\frac{1}{|\tilde B|}\int_{\tilde B} v_i^{-p_i'}\right)^{1/p_i'}\prod_{i\in \mathcal{I}_2^2}\left(\frac{1}{|\tilde B|}\int_{\tilde B}v_i^{-1}\right).
		\end{align*}
		By combining the estimates above, we finally arrive to
		\begin{align*}
		\int_B |I_{\tilde\alpha}^m\vec{g}(x)|\,dx&\leq \left(\prod_{i\in\mathcal{I}_2^2}\|f_iv_i\|_\infty\right)\left(\prod_{i\in \mathcal{I}_1}\|f_i\mathcal{X}_{\tilde B}\|_1\right)\int_B I(x,B)\,dx\\
		&\lesssim \left(\prod_{i\in\mathcal{I}_2^2}\|f_iv_i\|_\infty\right)\left(\prod_{i\in \mathcal{I}_1}\|f_i\mathcal{X}_{\tilde B}\|_1\right)|\tilde B|^{(\tilde\alpha-\tilde\alpha_0)/n-m_1+1/q'+1-1/(m_2p^*)}\\
		&\qquad \times \prod_{i\in\mathcal{I}_2^1}\|f_iv_i\|_{p_i}\left(\frac{1}{|\tilde B|}\int_{\tilde B} v_i^{-p_i'}\right)^{1/p_i'}\prod_{i\in \mathcal{I}_2^2}\left(\frac{1}{|\tilde B|}\int_{\tilde B}v_i^{-1}\right)\\
		&\lesssim \left(\prod_{i=1}^m \|f_iv_i\|_{p_i}\right)\prod_{i\in\mathcal{I}_1} \left\|v_i^{-1}\mathcal{X}_{\tilde B}\right\|_\infty\prod_{i\in \mathcal{I}_2}\left(\frac{1}{|\tilde B|}\int_{\tilde B} v_i^{-p_i'}\right)^{1/p_i'} \\
		&\qquad \times |\tilde B|^{(\tilde\alpha-\tilde\alpha_0)/n-m_1+1/q'+1-1/(m_2p^*)}.
		\end{align*}
  By applying condition \eqref{eq: condicion mezclada general} with $\lambda=2$ and $\sigma=\bm{1}$, we arrive to
  
\begin{align*}
\int_B |I_{\tilde\alpha}^m\vec{g}(x)|\,dx& \lesssim \left(\prod_{i=1}^m \|f_iv_i\|_{p_i}\right)|B|^{\tilde\delta/n+1/p-1-\tilde\alpha/n+(\tilde\alpha-\tilde\alpha_0)/n-m_1+1/q'+1-1/(m_2p^*)}w^{-1}(B)\\
&=\left(\prod_{i=1}^m \|f_iv_i\|_{p_i}\right)|B|^{\tilde\delta/n}w^{-1}(B),
  \end{align*} 
  so the proof is complete when $m_2\geq 2$. 
  
  In order to finish we shall provide the estimate for $0\leq m_2<2$. There are only three possible cases:  
		\begin{enumerate}
			\item $m_2=0$. In this case we have $m_2^1=m_2^2=0$ and this implies $\vec{p}=(1,1,\dots,1)$. This situation is not possible, because $p>n/\tilde\alpha$.
			\item $m_2^1=0$ and $m_2^2=1$. In this case $1/p=m-1$. Condition $p>n/\tilde\alpha$ implies $\tilde\alpha>(m-1)n$. Let $i_0$ be the index such that $p_{i_0}=\infty$. By using Fubini's theorem, we can proceed in the following way
			\[\int_B \int_{\tilde B^m}\frac{\prod_{i=1}^m|f_i(y_i)|}{(\sum_{i=1}^m|x-y_i|)^{mn-\tilde\alpha}}\,d\vec{y}\,dx=\int_{\tilde B^m}\prod_{i=1}^m|f_i(y_i)|\left(\int_B \left(\sum_{i=1}^m|x-y_i|\right)^{\tilde\alpha-mn}\,dx\right)\,d\vec{y}.\]
			Since
			\begin{align*}
			\int_B \left(\sum_{i=1}^m|x-y_i|\right)^{\tilde\alpha-mn}\,dx&\lesssim \int_0^{4R}\rho^{\tilde\alpha-mn}\rho^{n-1}\,d\rho\\
			&\lesssim  |B|^{\tilde\alpha/n-m+1},
			\end{align*}
			by applying Hölder inequality,  and \eqref{eq: condicion mezclada general} (again with $\lambda=2$ and $\sigma=\bm{1}$), we get
			\begin{align*}
			\int_B |I_{\tilde\alpha}^m\vec{g}(x)|\,dx&\lesssim |B|^{\tilde\alpha/n-m+2}\left(\prod_{i=1}^m\|f_iv_i\|_{p_i}\right)\left(\prod_{i\in \mathcal{I}_1}\left\|v_i^{-1}\mathcal{X}_{\tilde B}\right\|_\infty\right) \left(\frac{1}{|\tilde B|}\int_{\tilde B}v_{i_0}^{-1}\right)\\
			&\lesssim \left(\prod_{i=1}^m\|f_iv_i\|_{p_i}\right)| B|^{\tilde\alpha/n-m+2+\tilde\delta/n-\tilde\alpha/n+1/p-1}w^{-1}(B)\\
			&= \left(\prod_{i=1}^m\|f_iv_i\|_{p_i}\right)|B|^{\tilde\delta/n}w^{-1}(B).
			\end{align*}
			\item $m_2^1=1$ and $m_2^2=0$. If $i_0$ denotes the index for which $1<p_{i_0}<\infty$, the condition $p>n/\tilde\alpha$ implies that
			\[\frac{\tilde\alpha}{n}>\frac{1}{p}=m-1+\frac{1}{p_{i_0}},\]
			and thus $\tilde\alpha>(m-1)n$. By repeating the steps in the previous case we arrive to 
			\begin{align*}
			\int_B |I_{\tilde\alpha}^m\vec{g}(x)|\,dx&\lesssim |B|^{\tilde\alpha/n-m+1+1/p_{i_0}'}\left(\prod_{i=1}^m\|f_iv_i\|_{p_i}\right)\left(\prod_{i\in \mathcal{I}_1}\left\|v_i^{-1}\mathcal{X}_{\tilde B}\right\|_\infty\right)\left(\frac{1}{|\tilde B|}\int_{\tilde B}v_{i_0}^{-p_{i_0}'}\right)^{1/p_{i_0}'}\\
            &\lesssim \left(\prod_{i=1}^m\|f_iv_i\|_{p_i}\right)
            |B|^{\tilde\alpha/n-m+1+1/p_{i_0}'+\tilde\delta/n-\tilde\alpha/n+1/p-1}w^{-1}(B)\\
            &=\left(\prod_{i=1}^m\|f_iv_i\|_{p_i}\right)|B|^{\tilde\delta/n}w^{-1}(B).
			\end{align*}
		\end{enumerate}
		This covers all the possible cases for $m_2$ and the proof is complete.\qedhere
\end{proof}

\medskip

\begin{proof}[Proof of Theorem~\ref{teo: acotacion Lp Lipschitz para T_alpha,b (suma)}]
It is enough to prove the estimate for $T_{\alpha,b_j}^m\vec{f}$ with $C$ independent of $j$. Furthermore, we only need to show that
\begin{equation}\label{eq: teo: acotacion Lp Lipschitz para T_alpha,b (suma) - eq1}
\frac{1}{w^{-1}(B)|B|^{\tilde\delta/n}}\int_B \left|T_{\alpha,b_j}^{m}\vec{f}(x)-c_j\right|\,dx\leq C\prod_{i=1}^m\|f_iv_i\|_{p_i},
\end{equation}
for some positive constant $c_j$ and every ball $B$, with $C$ independent of $B$ and $j$. From \eqref{eq: teo: acotacion Lp Lipschitz para T_alpha,b (suma) - eq1}, by choosing $c=\sum_{j=1}^m c_j$, the result will immediately follow by a simple summation on $j$.

Fix $1\leq j\leq m$ and a ball $B=B(x_B, R)$. We decompose $\vec{f}=(f_1,f_2,\dots,f_m)$ as $\vec{f}=\vec{f}_1+\vec{f}_2$, where $\vec{f}_1=\mathcal{X}_{2B}\vec{f}$. We pick 
\begin{align*}
c_j=\left(T_{\alpha,b_j}^{m}\vec{f}_2\right)_B
&=\frac{1}{|B|}\int_B T_{\alpha,b_j}^{m}\vec{f}_2(z)\,dz\\
&=\sum_{\sigma\in S_m,\sigma\neq \bf{1}}\frac{1}{|B|}\int_B\int_{(\bm{2B})^\sigma} (b_j(z)-b_j(y_j))K_\alpha(z,\vec{y})\prod_{i=1}^mf_i(y_i)\,d\vec{y}\,dz,
\end{align*}
by virtue of \eqref{eq: representacion integral de T_a,bj^m}.

Then we proceed as follows
\begin{align*}
    \frac{1}{w^{-1}(B)|B|^{\tilde\delta/n}}\int_B |T_{\alpha,b_j}^{m}\vec{f}(x)-c_j|\,dx&\leq\frac{1}{w^{-1}(B)|B|^{\tilde\delta/n}}\left(\int_B |T_{\alpha,b_j}^{m}\vec{f}_1(x)|\,dx+\int_B|T_{\alpha,b_j}^{m}\vec{f}_2(x)-c_j|\,dx\right)\\
    &=\frac{1}{w^{-1}(B)|B|^{\tilde\delta/n}}\left(I_1+I_2\right).
\end{align*}
We begin with the estimate of $I_1$. By Lemma~\ref{lema: acotacion local de Ialpha,m} we obtain 
\begin{align*}
I_1=\int_B |T_{\alpha,b_j}^{m}\vec{f}_1(x)|\,dx&\leq\int_B\int_{(2B)^m}|b_j(x)-b_j(y_j)|\,|K_\alpha(x,\vec{y})|\prod_{i=1}^m|f_i(y_i)|\,d\vec{y}\,dx\\
&\lesssim\|b_j\|_{\Lambda(\delta)}\int_B\int_{(2B)^m}\frac{|x-y_j|^{\delta}}{(\sum_{i=1}^m|x-y_i|)^{mn-\alpha}}\prod_{i=1}^m|f_i(y_i)|\,d\vec{y}\,dx\\
&\lesssim\|\bm{b}\|_{(\Lambda(\delta))^m}\int_B |I_{\tilde\alpha}^m\vec{f}_1(x)|\,dx\\
&\lesssim \|\bm{b}\|_{(\Lambda(\delta))^m}|B|^{\tilde\delta/n}w^{-1}(B)\prod_{i=1}^m\|f_iv_i\|_{p_i}.
\end{align*}
This yields
\[\frac{I_1}{w^{-1}(B)|B|^{\tilde\delta/n}} \lesssim \|\bm{b}\|_{(\Lambda(\delta))^m}\prod_{i=1}^m\|f_iv_i\|_{p_i}.\]
Let us now estimate $I_2$. From our choice of $c_j$, we first observe that
\begin{align*}
\int_B |T_{\alpha, b_j}^m\vec{f}_2(x)-c_j|\,dx&\leq \frac{1}{|B|}\int_B\int_B|T_{\alpha,b_j}^m\vec{f}_2(x)-T_{\alpha,b_j}^m\vec{f}_2(z)|\,dz\,dx\\
&\hspace{-1truecm}\leq \frac{1}{|B|}\sum_{\sigma\in S_m,\sigma\neq \bf{1}}\int_B\int_B\int_{(\bm{2B})^\sigma}\left|(b_j(x)-b_j(y_j))K_\alpha(x,\vec{y})-(b_j(z)-b_j(y_j))K_\alpha(z,\vec{y})\right|\\
&\qquad \times\prod_{i=1}^m|f_i(y_i)|\,d\vec{y}\,dz\,dx.
\end{align*}
Therefore
\begin{align*}
I_2 &\leq \frac{1}{|B|}\sum_{\sigma\in S_m,\sigma\neq \bf{1}}\int_B\int_B\int_{(\bm{2B})^\sigma}\left|(b_j(x)-b_j(y_j))(K_\alpha(x,\vec{y})-K_\alpha(z,\vec{y}))\right|\prod_{i=1}^m|f_i(y_i)|\,d\vec{y}\,dz\,dx\,\,\\
&\quad + \frac{1}{|B|}\sum_{\sigma\in S_m,\sigma\neq \bf{1}}\int_B\int_B\int_{(\bm{2B})^\sigma}\left|(b_j(x)-b_j(z))K_\alpha(z,\vec{y})\right|\prod_{i=1}^m|f_i(y_i)|\,d\vec{y}\,dz\,dx\\
&= \frac{1}{|B|}\sum_{\sigma\in S_m,\sigma\neq \bf{1}} (I_2^1(\sigma)+I_2^2(\sigma)).
\end{align*} 
Let us estimate every term above separately. Fix $\sigma\in S_m,\sigma\neq \bf{1}$. Since $\sigma\neq \bf{1}$, there exists at least one index $i_0$ such that $y_{i_0}\not\in 2B$. Then
\[\sum_{i=1}^m |x-y_i|\geq |x-y_{i_0}|>R>\frac{|x-z|}{2}.\]
 So we can apply condition
\eqref{eq: condicion de suavidad} in order to get
\begin{align*}
  |K_\alpha(x,\vec{y})-K_\alpha(z,\vec{y})|&\lesssim \frac{|x-z|^\gamma}{(\sum_{i=1}^m |x-y_i|)^{mn-\alpha+\gamma}}\\
  &\lesssim \frac{|B|^{\gamma/n}}{(\sum_{i=1}^m |x-y_i|)^{mn-\alpha+\gamma}}.
\end{align*}
Recalling that $\delta<\gamma$, we can proceed with $I_2^1(\sigma)$ as follows 
\begin{align*}
I_2^1(\sigma)&\lesssim\|b_j\|_{\Lambda(\delta)}|B|^{\gamma/n}\int_B\int_B \int_{(\bm{2B})^\sigma}\frac{|x-y_j|^\delta\prod_{i=1}^m|f_i(y_i)|}{(\sum_{i=1}^m|x-y_i|)^{mn-\alpha+\gamma}}\,d\vec{y}\,dz\,dx\\
&\lesssim\|\bm{b}\|_{(\Lambda(\delta))^m} |B|^{1+\gamma/n}\int_B\int_{(\bm{2B})^\sigma}\frac{\prod_{i=1}^m|f_i(y_i)|}{(\sum_{i=1}^m|x-y_i|)^{mn-\tilde\alpha+\delta+\gamma-\delta}}\,d\vec{y}\,dx\\
&\lesssim \|\bm{b}\|_{(\Lambda(\delta))^m}|B|^{1+\delta/n}\int_B\int_{(\bm{2B})^\sigma}\frac{\prod_{i=1}^m|f_i(y_i)|}{(\sum_{i=1}^m|x_B-y_i|)^{mn-\tilde\alpha+\delta}}\,d\vec{y}\,dx\\
&=\|\bm{b}\|_{(\Lambda(\delta))^m}|B|^{2+\delta/n}F(x_B,\sigma).
\end{align*}
In order to estimate the term $F(x_B,\sigma)$, we conveniently separate it into a product of integrals, and combine Hölder inequality with condition \eqref{eq: condicion mezclada general}, applied with $\sigma$ and $\lambda=2$. We obtain \refstepcounter{BPR}\label{pag: estimacion de F(x_B,sigma)} 
 %By\refstepcounter{BPR}\label{pag: estimacion de J_1(x,sigma)} separating the factors in $J_1$ and applying Hölder inequality we arrive to
\begin{align*}
F(x_B,\sigma)&\lesssim \left(\prod_{i:\sigma_i=1} \int_{2B}\frac{|f_i(y_i)|}{|2B|^{1-\tilde \alpha_i/n+\delta/(mn)}}\,dy_i\right)\left(\prod_{i: \sigma_i=0} \int_{\mathbb{R}^n\backslash 2B}\frac{|f_i(y_i)|}{|x_B-y_i|^{n-\tilde\alpha_i+\delta/m}}\,dy_i\right)\\
&\lesssim \prod_{i=1}^m\|f_iv_i\|_{p_i}\left(\prod_{i:\sigma_i=1} \left\|\frac{v_i^{-1}\mathcal{X}_{ 2B}}{|2B|^{1-\tilde\alpha_i/n+\delta/(mn)}}\right\|_{p_i'}\right)\left(\prod_{i:\sigma_i=0} \left\|\frac{v_i^{-1}\mathcal{X}_{\mathbb{R}^n\backslash 2B}}{|x_B-\cdot|^{n-\tilde\alpha_i+\delta/m}}\right\|_{p_i'}\right)\\
&=\left(\prod_{i=1}^m\|f_iv_i\|_{p_i}\right)|2B|^{-\sum_{i:\sigma_i=1}(1-\tilde\alpha_i/n+\delta/(mn))}\prod_{i: \sigma_i=1} \left\|v_i^{-1}\mathcal{X}_{2B}\right\|_{p_i'} \prod_{i:\sigma_i=0}\left\|\frac{v_i^{-1}\mathcal{X}_{\mathbb{R}^n\backslash 2B}}{|x_B-\cdot|^{n-\tilde\alpha_i+\delta/m}}\right\|_{p_i'}\\
&\lesssim \left(\prod_{i=1}^m\|f_iv_i\|_{p_i}\right) |B|^{-1+(\tilde\delta-\delta)/n}w^{-1}(B).
\end{align*}
Thus
\begin{equation}\label{eq: teo: acotacion Lp Lipschitz para T_alpha,b (suma) - eq3}
I_2^1(\sigma)\lesssim\|\bm{b}\|_{(\Lambda(\delta))^m}\left(\prod_{i=1}^m\|f_iv_i\|_{p_i}\right)|B|^{1+\tilde\delta/n}w^{-1}(B).
\end{equation}
It only remains to perform the estimate for $I_2^2(\sigma)$. By using the size condition for the kernel \eqref{eq: condicion de tamaño} we can write
\begin{align*}
I_2^2(\sigma)&\lesssim \|b_j\|_{\Lambda(\delta)}|B|^{\delta/n}\int_B\int_B\int_{(\bm{2B})^\sigma}\frac{\prod_{i=1}^m|f_i(y_i)|}{(\sum_{i=1}^m |z-y_i|)^{mn-\alpha}}\,d\vec{y}\,dx\,dz\\
&\lesssim \|\bm{b}\|_{(\Lambda(\delta))^m}|B|^{\delta/n}\int_B\int_B\int_{(\bm{2B})^\sigma}\frac{\prod_{i=1}^m|f_i(y_i)|}{(\sum_{i=1}^m |x_B-y_i|)^{mn-\alpha}}\,d\vec{y}\,dx\,dz\\
&= \|\bm{b}\|_{(\Lambda(\delta))^m}|B|^{2+\delta/n}F(x_B,\sigma)\\
&\lesssim \|\bm{b}\|_{(\Lambda(\delta))^m}\left(\prod_{i=1}^m \|f_iv_i\|_{p_i}\right)|B|^{1+\tilde\delta/n}w^{-1}(B),
\end{align*}
where we have used the estimate above for $F(x_B,\sigma)$. Consequently, we get
\[I_2\lesssim \|\bm{b}\|_{(\Lambda(\delta))^m}\left(\prod_{i=1}^m \|f_iv_i\|_{p_i}\right)|B|^{\tilde\delta/n}w^{-1}(B).\]
We obtained the desired bound for both $I_1$ and $I_2$. The proof is now complete.
\end{proof}

\section{Proof of Theorem~\ref{teo: acotacion Lp Lipschitz para T_alpha,b (producto)}}\label{seccion: conmutador producto}

We devote this section to prove Theorem~\ref{teo: acotacion Lp Lipschitz para T_alpha,b (producto)}. We shall first establish two auxiliary lemmas. The first one is essentially the boundedness given in Lemma~\ref{lema: acotacion local de Ialpha,m} with different parameters and the proof can be achieved by following the same steps. The second one, proved in \cite{BPRe}, is an estimate for differences of products.

\begin{lema}\label{lema: acotacion local de Ialpha,m (producto)}
Let $0<\alpha<mn$, $0<\delta<(mn-\alpha)/m$, $\tilde\alpha=\alpha+m\delta$ and $\vec{p}$ a vector of exponents that satisfies $p>n/\tilde\alpha$. Let $\tilde\delta\leq \delta$ and  $(w,\vec{v})$ be a pair of weights belonging  to the class  $\mathcal{H}_m(\vec{p},\tilde\alpha,\tilde\delta)$  such that $v_i^{-p_i'}\in\mathrm{RH}_m$ for every $i\in\mathcal{I}_2$. Then there exists a positive constant $C$ such that for every ball $B$ and every $\vec{f}$ such that $f_iv_i\in L^{p_i}$, $1\leq i\leq m$, we have that
\[\int_B |I_{\tilde\alpha}^m\vec{g}(x)|\,dx\lesssim|B|^{\tilde\delta/n}w^{-1}(B)\prod_{i=1}^m\|f_iv_i\|_{p_i},\]
where $\vec{g}=\mathcal{X}_{2B}\vec{f}$. 
\end{lema}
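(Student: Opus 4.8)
The statement to prove is Lemma~\ref{lema: acotacion local de Ialpha,m (producto)}, which is the exact analogue of Lemma~\ref{lema: acotacion local de Ialpha,m} but with the parameter relation $\tilde\alpha=\alpha+m\delta$ (instead of $\tilde\alpha=\alpha+\delta$) and the hypothesis $0<\delta<(mn-\alpha)/m$ (instead of $0<\delta<mn-\alpha$). Since the conclusion is phrased only in terms of $I_{\tilde\alpha}^m$ and the class $\mathcal{H}_m(\vec p,\tilde\alpha,\tilde\delta)$ — not in terms of $\delta$ or $\alpha$ separately — the proof is formally \emph{identical} to that of Lemma~\ref{lema: acotacion local de Ialpha,m}. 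The plan is therefore to observe that the only place where $\delta$ and $\alpha$ entered the earlier proof was through the single inequality $p>n/\tilde\alpha$ and the bookkeeping of the exponent of $|\tilde B|$, and to check that these go through verbatim.

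Concretely, I would proceed as follows. First, split $\mathcal I_2=\mathcal I_2^1\cup\mathcal I_2^2$ according to whether $p_i<\infty$ or $p_i=\infty$, and pull out the $L^\infty$ norms $\prod_{i\in\mathcal I_2^2}\|f_iv_i\|_\infty$ and the $L^1$ norms $\prod_{i\in\mathcal I_1}\|f_i\mathcal X_{\tilde B}\|_1$ exactly as before, reducing to the inner integral $I(x,B)$ over $\tilde B^{m_2}$. Next, using $p>n/\tilde\alpha$ decompose $\tilde\alpha=\tilde\alpha^1+\tilde\alpha^2$ with $\tilde\alpha^1>m_1n$ and $\tilde\alpha^2>n/p^\ast$, absorb the factor $(\sum_{i\in\mathcal I_2}|x-y_i|)^{\tilde\alpha^1-m_1n}\lesssim|\tilde B|^{\tilde\alpha^1/n-m_1}$, and reduce to an honest $I_{\tilde\alpha_0}^{m_2}$ acting on $\prod_j L^{r_j}$ with the same choice of $r_j$, $1/r=1+(m_2-1)/(m_2p^\ast)$, and $\tilde\alpha_0$ with $n/p^\ast<\tilde\alpha_0<n/r$. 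Then apply the classical Lebesgue-measure boundedness $I_{\tilde\alpha_0}^{m_2}\colon\prod_j L^{r_j}\to L^q$ (when $m_2\ge 2$), Hölder's inequality, and the reverse Hölder property $v_i^{-p_i'}\in\mathrm{RH}_m\subset\mathrm{RH}_{m_2}$ to bound $\prod_j\|h_j\mathcal X_{\tilde B}\|_{r_j}$ by $|\tilde B|^{1-1/(m_2p^\ast)}\prod_{i\in\mathcal I_2^1}\|f_iv_i\|_{p_i}\big(\tfrac1{|\tilde B|}\int_{\tilde B}v_i^{-p_i'}\big)^{1/p_i'}\prod_{i\in\mathcal I_2^2}\big(\tfrac1{|\tilde B|}\int_{\tilde B}v_i^{-1}\big)$. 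Collecting powers of $|\tilde B|$ gives the exponent $(\tilde\alpha-\tilde\alpha_0)/n-m_1+1/q'+1-1/(m_2p^\ast)$, and finally applying the local condition \eqref{eq: condicion mezclada general} (with $\lambda=2$, $\sigma=\bm 1$) for the class $\mathcal{H}_m(\vec p,\tilde\alpha,\tilde\delta)$ contributes the factor $|B|^{\tilde\delta/n+1/p-1-\tilde\alpha/n}w^{-1}(B)$; the powers of $|B|$ telescope to exactly $\tilde\delta/n$, as in the previous lemma. The three degenerate cases $m_2=0$, $(m_2^1,m_2^2)=(0,1)$, $(m_2^1,m_2^2)=(1,0)$ are handled by Fubini and the elementary bound $\int_B(\sum_i|x-y_i|)^{\tilde\alpha-mn}\,dx\lesssim|B|^{\tilde\alpha/n-m+1}$, again verbatim.

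There is genuinely no new obstacle here: the restriction $0<\delta<(mn-\alpha)/m$ is exactly what is needed to guarantee $0<\tilde\alpha=\alpha+m\delta<mn$, so that $I_{\tilde\alpha}^m$ is a bona fide multilinear fractional integral of admissible order, and the hypothesis $p>n/\tilde\alpha$ is assumed directly. Everything else in the argument depends on $\tilde\alpha$ and $\tilde\delta$ only, not on how $\tilde\alpha$ decomposes into $\alpha$ and $m\delta$. Accordingly, the proof can be compressed to the single remark that one repeats the proof of Lemma~\ref{lema: acotacion local de Ialpha,m} with $\tilde\alpha=\alpha+m\delta$ in place of $\tilde\alpha=\alpha+\delta$, the hypotheses $0<\delta<(mn-\alpha)/m$ and $p>n/\tilde\alpha$ ensuring that each step remains valid. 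This is precisely the route the excerpt announces (``the proof can be achieved by following the same steps''), and I would present it that way rather than reproducing the computation in full.
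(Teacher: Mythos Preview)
Your proposal is correct and matches the paper's own treatment: the paper gives no separate proof of this lemma, stating only that it ``is essentially the boundedness given in Lemma~\ref{lema: acotacion local de Ialpha,m} with different parameters and the proof can be achieved by following the same steps.'' Your observation that the argument depends only on $\tilde\alpha$ and $\tilde\delta$ (with $0<\tilde\alpha<mn$ guaranteed by $0<\delta<(mn-\alpha)/m$), and not on how $\tilde\alpha$ decomposes into $\alpha$ and $m\delta$, is exactly the point.
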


\begin{lema}\label{lema: estimacion diferencia de productos}
Let $m\in\mathbb{N}$ and $a_i,b_i$ and $c_i$ be real numbers for $1\leq i\leq m$. Then
\[\prod_{i=1}^m (a_i-b_i)-\prod_{i=1}^m (c_i-b_i)=\sum_{j=1}^m (a_j-c_j)\prod_{i<j}(a_i-b_i)\prod_{i>j}(c_i-b_i). \]
\end{lema}

\medskip

\begin{proof}[Proof of Theorem~\ref{teo: acotacion Lp Lipschitz para T_alpha,b (producto)}]
It will be enough to prove that
\begin{equation}\label{eq: teo: acotacion Lp Lipschitz para T_alpha,b (producto) - eq1}
\frac{1}{w^{-1}(B)|B|^{\tilde\delta/n}}\int_B |\mathcal{T}_{\alpha,\bm{b}}^{m}\vec{f}(x)-c|\,dx\lesssim\prod_{i=1}^m\|f_iv_i\|_{p_i},
\end{equation}
for some constant $c$ and every ball $B$, with $C$ independent of $B$ and $\vec{f}$. 

Fix a ball $B=B(x_B, R)$. By proceeding as in the proof of Theorem~\ref{teo: acotacion Lp Lipschitz para T_alpha,b (suma)}, we split  $\vec{f}=\vec{f}_1+\vec{f}_2$, where $\vec{f}_1=\mathcal{X}_{2B}\vec{f}$. We take 
\[c=\left(\mathcal{T}_{\alpha,\bm{b}}^{m}\vec{f}_2\right)_B=\frac{1}{|B|}\int_B \mathcal{T}_{\alpha,\bm{b}}^{m}\vec{f}_2(z)\,dz.\]
By Proposition~\ref{Integral Representation}, for $z\in B$ we have that
\begin{equation}\label{eq: teo: acotacion Lp Lipschitz para T_alpha,b (producto) - eq2}
 \mathcal{T}_{\alpha,\bm{b}}^{m}\vec{f}_2(z)=\sum_{\sigma\in S_m,\sigma\neq \bf{1}}\int_{(\bm{2B})^\sigma} K_\alpha(z,\vec{y})\prod_{i=1}^m(b_i(z)-b_i(y_i))f_i(y_i)\,d\vec{y}.
\end{equation}

Thus
\begin{align*}
    \frac{1}{w^{-1}(B)|B|^{\tilde\delta/n}}\int_B |\mathcal{T}_{\alpha,\bm{b}}^{m}\vec{f}(x)-c|\,dx&\leq\frac{1}{w^{-1}(B)|B|^{\tilde\delta/n}}\int_B |\mathcal{T}_{\alpha,\bm{b}}^{m}\vec{f}_1(x)|\,dx\\
    &\qquad +\frac{1}{w^{-1}(B)|B|^{1+\tilde\delta/n}}\int_B\int_B|\mathcal{T}_{\alpha,\bm{b}}^{m}\vec{f}_2(x)-\mathcal{T}_{\alpha,\bm{b}}^{m}\vec{f}_2(z)|\,dz\,dx\\
    &=\frac{1}{w^{-1}(B)|B|^{\tilde\delta/n}}\left(I+II\right).
\end{align*}
Let us first estimate $I$. By applying Proposition~\ref{Integral Representation}, \eqref{eq: condicion de tamaño} and Lemma~\ref{lema: acotacion local de Ialpha,m (producto)} we get 
\begin{align*}
I=\int_B |\mathcal{T}_{\alpha,\bm{b}}^{m}\vec{f}_1(x)|\,dx&\leq\int_B\int_{(2B)^m}\,|K_\alpha(x,\vec{y})|\prod_{i=1}^m|b_i(x)-b_i(y_i)|\,|f_i(y_i)|\,d\vec{y}\,dx\\
&\lesssim\prod_{i=1}^m\|b_i\|_{\Lambda(\delta)}\int_B\int_{(2B)^m}|K_{\tilde\alpha}(x,\vec{y})|\prod_{i=1}^m|f_i(y_i)|\,d\vec{y}\,dx\\
&\lesssim\|\bm{b}\|_{(\Lambda(\delta))^m}^m\int_B |I_{\tilde\alpha}^m\vec{f}_1(x)|\,dx\\
&\lesssim\|\bm{b}\|_{(\Lambda(\delta))^m}^m|B|^{\tilde\delta/n}w^{-1}(B)\prod_{i=1}^m\|f_iv_i\|_{p_i}.
\end{align*}
Consequently,
\[\frac{1}{w^{-1}(B)|B|^{\tilde\delta/n}}\,\,I\lesssim \|\bm{b}\|_{(\Lambda(\delta))^m}^m\prod_{i=1}^m\|f_iv_i\|_{p_i}.\]

We continue with the estimate of $II$. We shall see that
\begin{equation}\label{eq: teo: acotacion Lp Lipschitz para T_alpha,b (producto) - eq3}
|\mathcal{T}_{\alpha,\bm{b}}^{m}\vec{f}_2(x)-\mathcal{T}_{\alpha,\bm{b}}^{m}\vec{f}_2(z)|\lesssim\|\bm{b}\|_{(\Lambda(\delta))^m}^m|B|^{\tilde\delta/n-1}w^{-1}(B)\prod_{i=1}^m\|f_iv_i\|_{p_i},
\end{equation}
for every $x,z\in B$. This would imply that $II\lesssim\|\bm{b}\|_{(\Lambda(\delta))^m}^m|B|^{\tilde\delta/n}w^{-1}(B)\prod_{i=1}^m\|f_iv_i\|_{p_i}$ and the desired estimate would follow.

By \eqref{eq: teo: acotacion Lp Lipschitz para T_alpha,b (producto) - eq2}, for $x\in B$ we can write
\begin{align*}
    |\mathcal{T}_{\alpha,\bm{b}}^{m}&\vec{f}_2(x)-\mathcal{T}_{\alpha,\bm{b}}^{m}\vec{f}_2(z)|\\
    &\leq \sum_{\sigma\in S_m,\sigma\neq\bm{1}}\int_{(\bm{2B})^\sigma}\left|K_\alpha(x,\vec{y})\prod_{i=1}^m(b_i(x)-b_i(y_i))- K_\alpha(z,\vec{y})\prod_{i=1}^m(b_i(z)-b_i(y_i))\right|\\
    &\qquad \times \prod_{i=1}^m|f_i(y_i)|\,d\vec{y}\\
    &\leq \sum_{\sigma\in S_m,\sigma\neq\bm{1}}\int_{(\bm{2B})^\sigma} |K_{\alpha}(x,\vec{y})-K_{\alpha}(z,\vec{y})|\prod_{i=1}^m|b_i(x)-b_i(y_i)|\,|f_i(y_i)|\,d\vec{y}\\
    &\qquad + \sum_{\sigma\in S_m,\sigma\neq\bm{1}}\int_{(\bm{2B})^\sigma} |K_{\alpha}(z,\vec{y})|\,\left|\prod_{i=1}^m(b_i(x)-b_i(y_i))-\prod_{i=1}^m(b_i(z)-b_i(y_i))\right|\\
    &\qquad \quad\times \prod_{i=1}^m|f_i(y_i)|\,d\vec{y}\\
    &=\sum_{\sigma\in S_m,\sigma\neq\bm{1}} (I_1^{\sigma}+I_2^{\sigma}).
\end{align*}
Fix $\sigma\in S_m,\sigma\neq \bf{1}$. Let us first estimate $I_1^\sigma$. By applying condition \eqref{eq: condicion de suavidad} we have that

\begin{align*}
  |K_\alpha(x,\vec{y})-K_\alpha(z,\vec{y})|&\lesssim \frac{|x-z|^\gamma}{(\sum_{i=1}^m |x-y_i|)^{mn-\alpha+\gamma}}\\
  &\lesssim\frac{|B|^{\gamma/n}}{(\sum_{i=1}^m |x_B-y_i|)^{mn-\alpha+\gamma}}.
\end{align*}
Therefore we have that
\begin{align*}
I_1^\sigma&\lesssim |B|^{\gamma/n}\prod_{i=1}^m\|b_i\|_{\Lambda(\delta)} \int_{(\bm{2B})^\sigma}\frac{(\sum_{i=1}^m|x_B-y_i|)^{m\delta}\prod_{i=1}^m|f_i(y_i)|}{(\sum_{i=1}^m|x_B-y_i|)^{mn-\alpha+\gamma}}\,d\vec{y}\\
&\lesssim\|\bm{b}\|_{(\Lambda(\delta))^m}^m|B|^{\gamma/n} \int_{(\bm{2B})^\sigma}\frac{\prod_{i=1}^m|f_i(y_i)|}{(\sum_{i=1}^m|x_B-y_i|)^{mn-\tilde\alpha+\delta+\gamma-\delta}}\,d\vec{y}\\
&\lesssim\|\bm{b}\|_{(\Lambda(\delta))^m}^m|B|^{\delta/n}\int_{(\bm{2B})^\sigma}\frac{\prod_{i=1}^m|f_i(y_i)|}{(\sum_{i=1}^m|x_B-y_i|)^{mn-\tilde\alpha+\delta}}\,d\vec{y},
\end{align*}
since $\gamma>\delta$ and $\vec{y}\in (\bm{2B})^{\sigma}$ implies that $|x_B-y_j|\gtrsim |B|^{1/n}$ for at least one index $1\leq j\leq m$. From this expression we can continue as in the estimate of $F(x_B,\sigma)$  performed in page~\pageref{pag: estimacion de F(x_B,sigma)} in order to obtain
\[I_1^\sigma \lesssim \|\bm{b}\|_{(\Lambda(\delta))^m}^m
|B|^{\tilde\delta/n-1}w^{-1}(B)
\prod_{i=1}^m\|f_iv_i\|_{p_i}.\]
\color{black}
Next we proceed to estimate $I_2^\sigma$. Fix $\sigma\in S_m, \sigma\neq\bm{1}$. By applying Lemma~\ref{lema: estimacion diferencia de productos} we have that
\begin{align*}
  \left|\prod_{i=1}^m(b_i(x)-b_i(y_i))-\prod_{i=1}^m(b_i(z)-b_i(y_i))\right|
  &\leq \sum_{j=1}^m |b_j(x)-b_j(z)|\prod_{i>j}|b_i(z)-b_i(y_i)|\prod_{i<j}|b_i(x)-b_i(y_i)|\\
  &\lesssim \left(\prod_{i=1}^m \|b_i\|_{\Lambda(\delta)}\right)|B|^{\delta/n}\sum_{j=1}^m\prod_{i>j}|z-y_i|^{\delta}\prod_{i<j}|x-y_i|^{\delta} 
\end{align*}
Since $x$ and $z$ belong to $B$ and $\vec{y}\in (\bm{2B})^\sigma$, we have that
\[|x-y_i|\lesssim \sum_{j=1}^m |x_B-y_j| \qquad \text{ and also }\qquad |z-y_i|\lesssim \sum_{j=1}^m |x_B-y_j|,\]
for each $i$, regardless $y_i$ belongs to $2B$ or $\mathbb{R}^n\backslash 2B$.
Therefore we arrive to
\[\left|\prod_{i=1}^m(b_i(x)-b_i(y_i))-\prod_{i=1}^m(b_i(z)-b_i(y_i))\right|\lesssim  \|\bm{b}\|_{(\Lambda(\delta))^m}^m |B|^{\delta/n}\left(\sum_{j=1}^m|x_B-y_j|\right)^{(m-1)\delta}.\]
By using this estimate we can proceed with $I_2^\sigma$ as follows
\begin{align*}
I_2^\sigma&\lesssim \|\bm{b}\|_{(\Lambda(\delta))^m}^m|B|^{\delta/n}\int_{(\bm{2B})^\sigma}\frac{\prod_{i=1}^m |f_i(y_i)|}{\left(\sum_{i=1}^m|x_B-y_i|\right)^{mn-\alpha+(1-m)\delta}}\,d\vec{y}\\
&\lesssim\|\bm{b}\|_{(\Lambda(\delta))^m}^m|B|^{\delta/n}\int_{(\bm{2B})^\sigma}\frac{\prod_{i=1}^m |f_i(y_i)|}{\left(\sum_{i=1}^m|x_B-y_i|\right)^{mn-\tilde\alpha+\delta}}\,d\vec{y}\\
&\lesssim\|\bm{b}\|_{(\Lambda(\delta))^m}^m|B|^{\delta/n}\prod_{i: \sigma_i=1}\int_{2B}\frac{|f_i(y_i)|}{|2B|^{1-\tilde\alpha_i/n+\delta/(mn)}}\,dy_i\prod_{i: \sigma_i=0}\int_{\mathbb{R}^n\backslash 2B}\frac{|f_i(y_i)|}{|x_B-y_i|^{n-\tilde\alpha_i+\delta/m}}\,dy_i.
\end{align*}
By applying Hölder inequality and condition \eqref{eq: condicion mezclada general} with $\lambda=2$ we get
\begin{align*}
    I_2^\sigma&\lesssim \|\bm{b}\|_{(\Lambda(\delta))^m}^m|B|^{\delta/n-\theta(\sigma)}\prod_{i=1}^m\|f_iv_i\|_{p_i}\prod_{i: \sigma_i=1}\|v_i^{-1}\mathcal{X}_{2B}\|_{p_i'}\prod_{i: \sigma_i=0}\left\|\frac{v_i^{-1}\mathcal{X}_{\mathbb{R}^n\backslash 2B}}{|x_B-\cdot|^{n-\tilde\alpha_i+\delta/m}}\right\|_{p_i'}\\
    &\lesssim \|\bm{b}\|_{(\Lambda(\delta))^m}^m |B|^{\tilde\delta/n -1}w^{-1}(B)
    \prod_{i=1}^m \|f_iv_i\|_{p_i},
\end{align*}
where $\theta(\sigma)=\sum_{i:\sigma_i=1}\left(1-\tilde\alpha_i/n+\delta/(mn)\right) $. So \eqref{eq: teo: acotacion Lp Lipschitz para T_alpha,b (producto) - eq3} holds and the proof is complete.
\end{proof}

\section{The class \texorpdfstring{$\mathcal{H}_m(\vec{p},\beta,\tilde\delta)$}{$Hm(p,\beta,\tilde\delta)$}}\label{seccion: clase de pesos}

We devote this section to study the class of weights $\mathcal{H}_m(\vec{p},\beta,\tilde\delta)$. We recall that a pair $(w,\vec{v})$ belongs to the class $\mathcal{H}_m(\vec{p},\beta,\tilde\delta)$ if there exists a positive constant $C$ such that

\begin{equation*}
    \frac{|B|^{1+(\delta-\tilde\delta)/n}}{w^{-1}(B)}\prod_{i=1}^m\left\|\frac{v_i^{-1}}{(|B|^{1/n}+|x_B-\cdot|)^{(n-\beta_i+\delta/m)}}\right\|_{p_i'}\leq C
\end{equation*}
for every ball $B=B(x_B, R)$.

In \cite{BPR22(2)} we gave a complete study of a variant of this class of weights. The following theorem establishes that for certain values of the parameters involved we get trivial functions on the considered class (cf. Theorem 1.2 in \cite{BPR22(2)}). 

\begin{teo}
Let $0<\beta<mn$, $\tilde\delta,\delta\in\mathbb{R}$ and $\vec{p}$ a vector of exponents. Then we have that: 
\begin{enumerate}[\rm(a)]
\item\label{teoNopesos-a} Given $\tilde\delta>\delta$ or $\tilde\delta>\beta-n/p$, if  $(w,\vec{v})\in\mathcal{H}_m(\vec{p},\beta,\tilde\delta)$, then there exists $1\leq i\leq m$ and a measurable set $E$ with $|E|>0$ such that $v_i=\infty$ on $E$.
\item\label{teoNopesos-b} The same conclusion holds when $\tilde\delta=\beta-n/p=\delta$.
\end{enumerate}
\end{teo}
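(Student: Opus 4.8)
The plan is to extract from the defining inequality of $\mathcal{H}_m(\vec p,\beta,\tilde\delta)$ a scale-homogeneity in the radius $R$ that forces a contradiction unless some $v_i$ fails to be finite a.e. First I would fix a ball $B_0=B(x_0,1)$ and, for $R\geq 1$, compare the condition on $B=B(x_0,R)$ against the condition on $B_0$. On the ball $B$ the weight $w^{-1}(B)$ and the truncated integrals $\bigl\|v_i^{-1}(|B|^{1/n}+|x_B-\cdot|)^{-(n-\beta_i+\delta/m)}\bigr\|_{p_i'}$ can each be bounded below (or above) by restricting to $B_0\subset B$: on $B_0$ we have $|B|^{1/n}+|x_B-\cdot|\approx R$, so that factor is $\gtrsim R^{-(n-\beta_i+\delta/m)}\|v_i^{-1}\mathcal{X}_{B_0}\|_{p_i'}$, while $w^{-1}(B)\geq w^{-1}(B_0)$ only gives the wrong direction — so instead I would bound $w^{-1}(B)$ from above by splitting $B$ into $\sim R^n$ unit balls and using the condition \eqref{eq: condicion local} on each of them to control $w^{-1}$ of a unit ball by the local $v_i$-integrals. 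Collecting exponents, the left-hand side of the $\mathcal{H}_m$ condition on $B$ behaves like a fixed positive constant (depending on the $v_i$ data near $x_0$) times a power $R^{\kappa}$ of the radius, where a short computation gives $\kappa = 1+(\delta-\tilde\delta)/n - \sum_i(n-\beta_i+\delta/m)/n = (\delta-\tilde\delta)/n - 1 + (\beta-\delta)/n + n/(np) \cdot(\text{sign bookkeeping})$; carefully, $\kappa=(\beta-n/p-\tilde\delta)/n$ after the $w^{-1}$ estimate is incorporated. Hence if $\tilde\delta<\beta-n/p$ the condition is violated as $R\to\infty$, and if $\tilde\delta>\beta-n/p$ it is violated as $R\to 0$ by the analogous computation with $R\leq 1$ (where now $|B|^{1/n}+|x_B-\cdot|\approx 1$ on $B_0\supset B$). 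This handles the part of \eqref{teoNopesos-a} concerning $\tilde\delta>\beta-n/p$.

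For the other clause of \eqref{teoNopesos-a}, namely $\tilde\delta>\delta$, I would instead exploit the behaviour of the global condition \eqref{eq: condicion global} versus the local one. The idea is that $\tilde\delta>\delta$ makes the power $|B|^{1+(\delta-\tilde\delta)/n}$ appearing in \eqref{eq: condicion global} decay too slowly (or the local power in \eqref{eq: condicion local} too fast) relative to what the tails of $v_i^{-1}$ can absorb: taking $R\to 0$ while keeping $x_B$ fixed, the global factors $\bigl\|v_i^{-1}\mathcal{X}_{\mathbb{R}^n\setminus B}|x_B-\cdot|^{-(n-\beta_i+\delta/m)}\bigr\|_{p_i'}$ converge to a strictly positive (possibly infinite) limit, while $|B|^{1+(\delta-\tilde\delta)/n}/w^{-1}(B)\to\infty$ unless $w^{-1}(B)/|B|\to\infty$, i.e. unless $w^{-1}$ is essentially unbounded near $x_B$; and then the local condition \eqref{eq: condicion local}, which controls $|B|^{\text{(something)}}w^{-1}(B)^{-1}\prod(\frac1{|B|}\int_B v_i^{-p_i'})^{1/p_i'}$, forces some $\frac1{|B|}\int_B v_i^{-p_i'}\to\infty$ as $B$ shrinks around a.e. point, and by a Lebesgue-differentiation/maximal-function argument this can only happen on a positive-measure set if $v_i^{-p_i'}$ is not locally integrable, i.e. $v_i=\infty$ on a set of positive measure. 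I would make this precise by contradiction: assume every $v_i$ is finite a.e., pick a Lebesgue point $x_0$ of each $v_i^{-p_i'}$ (finite value), run the shrinking-ball estimate there, and derive that the $\mathcal{H}_m$ constant must be infinite.

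For part \eqref{teoNopesos-b}, the case $\tilde\delta=\beta-n/p=\delta$ is the borderline where the crude power counting in the first paragraph gives $\kappa=0$, so no contradiction comes from letting $R\to 0$ or $R\to\infty$ alone. Here I would argue more delicately: with $\tilde\delta=\delta$ the condition becomes $|B|/w^{-1}(B)\cdot\prod_i\|v_i^{-1}(|B|^{1/n}+|x_B-\cdot|)^{-(n-\beta_i+\delta/m)}\|_{p_i'}\leq C$, and with also $\tilde\delta=\beta-n/p$ one checks (by the same splitting of a large ball into unit balls, now with $\kappa=0$) that the partial sums $\sum$ over the unit subballs of a quantity of the form $\bigl(\frac{1}{|Q|}\int_Q v_i^{-p_i'}\bigr)^{1/p_i'}$-type averages must stay bounded while simultaneously, by the global tail of the condition, these averages are bounded below by a positive constant infinitely often — the mismatch being that $R^n$ unit balls each contributing a comparable positive amount cannot be packed into a bound independent of $R$ when the aggregate is controlled by $w^{-1}(B)\approx$ (sum of the unit-ball $w^{-1}$'s) but the product structure over $i$ breaks the additivity. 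Concretely I expect to compare $w^{-1}(B(x_0,R))$ with $\sum_{k} w^{-1}(Q_k)$ over a tiling and use \eqref{eq: condicion local} on each $Q_k$ together with the global \eqref{eq: condicion global} on $B(x_0,R)$, arriving at an inequality $R^n \lesssim 1$, which is absurd for large $R$; hence again some $v_i=\infty$ on a positive-measure set. \textbf{The main obstacle} I anticipate is precisely this borderline case: the naive scaling is exactly neutral, so the contradiction has to come from the \emph{subadditivity} of $w^{-1}$ against the \emph{multiplicativity} of the $m$ factors $\|v_i^{-1}\cdots\|_{p_i'}$, and getting the bookkeeping of the exponents $n-\beta_i+\delta/m$, $1/p_i'$, and the number $R^n$ of subballs to line up — especially when some $p_i=1$ (so $\mathcal{I}_1\neq\emptyset$ and the corresponding factor is an $L^\infty$ norm rather than an average) — will require the careful case analysis along the lines of the proof of Lemma~\ref{lema: acotacion local de Ialpha,m}.
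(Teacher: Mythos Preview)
Your treatment of the case $\tilde\delta>\delta$ in \eqref{teoNopesos-a} contains the right germ but is overcomplicated. Once you fix $x_B$ to be a Lebesgue point of $w^{-1}$, the ratio $|B|^{1+(\delta-\tilde\delta)/n}/w^{-1}(B)\approx R^{\delta-\tilde\delta}/w^{-1}(x_B)$ already blows up as $R\to 0$, while the product of the $p_i'$-norms is monotone increasing in this limit; the $\mathcal{H}_m$ inequality then forces that product to vanish, hence some $v_i^{-1}=0$ a.e. There is no need to pass through ``$w^{-1}(B)/|B|\to\infty$'' and then invoke the local condition on the $v_i$'s --- that detour never occurs.

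For the clause $\tilde\delta>\beta-n/p$ and for part~\eqref{teoNopesos-b}, however, there is a genuine gap. Your tiling strategy requires bounding $w^{-1}(B)$ \emph{from above} in terms of the $v_i$-data, but condition~\eqref{eq: condicion local} (indeed the full $\mathcal{H}_m$ condition) has $w^{-1}(B)$ in the denominator, so it only ever produces \emph{lower} bounds on $w^{-1}(B)$. Consequently the power-counting you sketch cannot yield a contradiction; in fact your assertion that ``if $\tilde\delta<\beta-n/p$ the condition is violated as $R\to\infty$'' is false, since Theorem~\ref{teo: ejemplos para la clase H_m(p,beta,delta)-cal} exhibits nontrivial weights precisely in that range. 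For $\tilde\delta>\beta-n/p$ the paper's argument is again a simple shrinking-ball computation: apply the local consequence~\eqref{eq: condicion local} with $x_B$ a Lebesgue point of $w^{-1}$ and of each $v_i^{-1}$, and let $R\to 0$ so that both sides converge to point values, with the factor $R^{\tilde\delta-\beta+n/p}\to 0$ forcing $\prod_i v_i^{-1}(x_B)=0$ a.e.

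For the borderline case~\eqref{teoNopesos-b} your proposed subadditivity-versus-multiplicativity argument does not close, again because no upper bound on $w^{-1}$ is available. The paper's idea is entirely different and is what you are missing: apply H\"older's inequality with exponents $p_i'/\xi$ (where $1/\xi=\sum_i 1/p_i'$) to merge the $m$ factors into a single integral
\[
\left(\int_{\mathbb{R}^n}\frac{\bigl(\textstyle\prod_{i=1}^m v_i^{-1}\bigr)^{\xi}}{(|B|^{1/n}+|x_B-y|)^{(mn-\beta+\delta)\xi}}\,dy\right)^{1/\xi}\lesssim \frac{w^{-1}(B)}{|B|}.
\]
The borderline hypothesis $\tilde\delta=\delta=\beta-n/p$ makes the exponent on the kernel exactly $(mn-\beta+\delta)\xi=n$. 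Letting $R\to 0$ along Lebesgue points of $w^{-1}$, the right-hand side stays bounded while the left-hand side increases to $\int_{\mathbb{R}^n}(\prod_i v_i^{-1})^{\xi}|x_B-y|^{-n}\,dy$; this integral is finite only if $\prod_i v_i^{-1}=0$ at a.e. Lebesgue point of $(\prod_i v_i^{-1})^\xi$, since $|y|^{-n}$ is not locally integrable. That is the mechanism that replaces your scale-neutral tiling.
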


\begin{proof}
    For item \eqref{teoNopesos-a}, let $\tilde\delta>\delta$ and assume that $(w,\vec{v})$ satisfies condition $\mathcal{H}_m(\vec{p},\beta,\tilde\delta)$. We choose $B(x_B,R)$ with $x_B$ being a Lebesgue point of $w^{-1}$. From \eqref{eq: condicion H_m(p,beta,delta tilde)-cal} we obtain 
    \begin{align*}
\prod_{i\in\mathcal{I}_1}\left\|\frac{v_i^{-1}}{(|B|^{1/n}+|x_B-\cdot|)^{n-\beta_i+\delta/m}}\right\|_\infty\,\prod_{i\in\mathcal{I}_2}\left(\int_{\mathbb{R}^n} \frac{v_i^{-p_i'}}{(|B|^{1/n}+|x_B-\cdot|)^{(n-\beta_i+\delta/m)p_i'}}\right)^{\tfrac{1}{p_i'}}&\lesssim \frac{w^{-1}(B)}{|B|R^{\delta-\tilde\delta}}.
	\end{align*}
 By letting $R\to 0$, we can conclude that
% supposing that all $v_i$ are non-negative and due to the fact that the left-hand term grows as R tends to zero, while the right-hand term tends to zero, it must happen that
\begin{equation*}
\prod_{i\in\mathcal{I}_1}\left\|\frac{v_i^{-1}}{(|B|^{1/n}+|x_B-\cdot|)^{n-\beta_i+\delta/m}}\right\|_\infty\,\prod_{i\in\mathcal{I}_2}\left(\int_{\mathbb{R}^n} \frac{v_i^{-p_i'}}{(|B|^{1/n}+|x_B-\cdot|)^{(n-\beta_i+\delta/m)p_i'}}\right)^{\tfrac{1}{p_i'}}=0
\end{equation*}
which implies that there must exist $1\leq i \leq m$ such that  $v_i = \infty$ almost everywhere.

On the other hand, if $\tilde\delta>\beta-n/p$, we pick a ball $B$ such that $x_B$ is a Lebesgue point of $w^{-1}$ and of each $v^{-1}_i$.
Then condition \eqref{eq: condicion local} implies that
 	\[\prod_{i=1}^m \frac{1}{|B|}\int_B v_i^{-1}\leq \prod_{i\in\mathcal{I}_1}\left\|v_i^{-1}\mathcal{X}_B\right\|_\infty\prod_{i\in\mathcal{I}_2}\left( \frac{1}{|B|}\int_B v_i^{-p'_i} \right)^{1/p'_i }
  	\lesssim \frac{w^{-1}(B)}{|B|} R^{\tilde\delta - \beta + n/p}\]
	for every $R>0$. If we let again $R$ approach to zero, we obtain
	\begin{equation*}
	\prod_{i=1 }^{m} v_{i}^{-1}(x_B)=0, 
	\end{equation*}	
	so $\prod_{i=1 }^{m} v_{i}^{-1}=0$ almost everywhere. This allows us to conclude that the set $\bigcap_{i=1}^m \{v_i^{-1} >0 \}$ has null measure, so there must exist an index $1\leq j\leq m$ and a measurable set $E$ with $|E|>0$ such that $v_j=\infty$ on $E$.  
 
In order to prove item~\eqref{teoNopesos-b} we define 
\begin{equation*}
\frac{1}{\xi} = \sum_{i=1}^m \frac{1}{p_i'}=\frac{mp-1}{p}.
\end{equation*}
Then by Hölder inequality it follows that
\begin{equation*}
    \left(\int_{\mathbb{R}^n}\frac{(\prod_{i\in \mathcal{I}_2}v_i^{-1})^{\xi}}{(|B|^{1/n}+|x_B-y|)^{\sum_{i\in \mathcal{I}_2}(m-\beta_i+~\delta/m)\xi}}\right)^{1/\xi}
\leq 
\prod_{i\in \mathcal{I}_2}\left(\int_{\mathbb{R}^n}
\frac{v_i^{-p_i'}}{(|B|^{1/n}+|x_B-y|)^{(m-\beta_i+~\delta/m)\xi}}
\right)^{1/p_i'}
\end{equation*}
In this case, since $\delta=\tilde\delta$, condition $\mathcal{H}(\vec{p}, \beta, \tilde\delta)$  implies that 
\begin{equation*}
    \prod_{i\in \mathcal{I}_1}
    \left\|\frac{v_i^{-1}}{(|B|^{1/n}+|x_B -\cdot|)^{n-\beta_i+~\delta/m}} \right\|_\infty
    \left(\int_{\mathbb{R}^n}\frac{(\prod_{i\in \mathcal{I}_2}v_i^{-1})^{\xi}}{(|B|^{1/n}+|x_B-y|)^{\sum_{i\in \mathcal{I}_2}(m-\beta_i+~\delta/m)\xi}}\right)^{1/\xi}
\lesssim
\frac{w^{-1}(B)}{|B|}
\end{equation*}
and consequently
\begin{equation*}
        \left(\int_{\mathbb{R}^n}\frac{(\prod_{i=1 }^mv_i^{-1})^{\xi}}{(|B|^{1/n}+|x_B-y|)^{(mn-\beta+\delta)\xi}}\right)^{1/\xi}
\lesssim
\frac{w^{-1}(B)}{|B|}.
\end{equation*}
From the definition of $\xi$ and the fact that $\delta=\beta-n/p$, it follows that  $(mn-\beta+\delta)\xi=n$, and from here, we can use the same argument as in \cite{BPR22} (Theorem 1.2 (b)) to conclude that  $\prod_{i=1}^mv_i^{-1}=0$ almost everywhere and consequently that there exists $j$ satisfying $v_j=\infty$ almost everywhere. 
\end{proof}

As we previously said, we want to characterize the values of the parameters for which we have nontrivial pairs $(w,\vec{v})$ in $\mathcal{H}_m(\vec{p},\beta,\tilde\delta)$. In order to do so, we shall need the following auxiliary result, which can be proved by following the arguments given in the proof of Lemma~2.1 in \cite{BPR22(2)}. It states sufficient conditions under we have an equivalence between conditions \eqref{eq: condicion H_m(p,beta,delta tilde)-cal} and \eqref{eq: condicion global}.

\begin{lema}\label{lema: equivalencia con local y global}
Let $0<\beta<mn$, $\delta$ and $\tilde\delta$ be real numbers, $\vec{p}$ a vector of exponents and $(w,\vec{v})$ a pair of weights such that $v_i^{-1}\in\mathrm{RH}_\infty$ for $i\in\mathcal{I}_1$ and $v_i^{-p_i'}$ is doubling for $i\in\mathcal{I}_2$. Then, condition $\mathcal{H}_m(\vec{p},\beta,\tilde\delta)$ is equivalent to \eqref{eq: condicion global}.
\end{lema}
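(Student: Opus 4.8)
The plan is to prove the two implications separately; one of them is essentially already contained in the discussion preceding the statement. Indeed, the implication \eqref{eq: condicion H_m(p,beta,delta tilde)-cal}~$\Rightarrow$~\eqref{eq: condicion global} needs no hypothesis on the weights: it is exactly the computation by which \eqref{eq: condicion global} was derived from \eqref{eq: condicion mezclada general} by taking $\sigma=\bm{0}$ and $\lambda=1$. One simply restricts each $L^{p_i'}$-norm in \eqref{eq: condicion H_m(p,beta,delta tilde)-cal} to $\mathbb{R}^n\backslash B$, where $|B|^{1/n}+|x_B-y|\approx|x_B-y|$, so that each factor dominates the corresponding factor of \eqref{eq: condicion global}, and then multiplies the resulting $m$ estimates.

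For the converse I would start by fixing a ball $B=B(x_B,R)$ and splitting, for each $i$,
\[\left\|\frac{v_i^{-1}}{(|B|^{1/n}+|x_B-\cdot|)^{n-\beta_i+\delta/m}}\right\|_{p_i'}\lesssim\left\|\frac{v_i^{-1}\mathcal{X}_B}{(|B|^{1/n}+|x_B-\cdot|)^{n-\beta_i+\delta/m}}\right\|_{p_i'}+\left\|\frac{v_i^{-1}\mathcal{X}_{\mathbb{R}^n\backslash B}}{(|B|^{1/n}+|x_B-\cdot|)^{n-\beta_i+\delta/m}}\right\|_{p_i'}.\]
On $\mathbb{R}^n\backslash B$ the second term is comparable to the $i$-th factor of \eqref{eq: condicion global}; on $B$ we have $|B|^{1/n}+|x_B-y|\approx|B|^{1/n}$, so the first term is comparable to $|B|^{-(n-\beta_i+\delta/m)/n}\|v_i^{-1}\mathcal{X}_B\|_{p_i'}$ (read as $\|v_i^{-1}\mathcal{X}_B\|_\infty$ when $i\in\mathcal{I}_1$). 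Multiplying out the product over $i$ into $2^m$ terms, it then suffices to show that each \emph{local} factor $|B|^{-(n-\beta_i+\delta/m)/n}\|v_i^{-1}\mathcal{X}_B\|_{p_i'}$ is controlled by the \emph{global} one $\big\|v_i^{-1}\mathcal{X}_{\mathbb{R}^n\backslash B}\,|x_B-\cdot|^{-(n-\beta_i+\delta/m)}\big\|_{p_i'}$, because then every one of the $2^m$ terms is bounded by the left-hand side of \eqref{eq: condicion global}.

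The key step --- and the place where the hypotheses on the $v_i$'s enter --- is to compare the size of $v_i^{-1}$ on $B$ with its size on the annulus $2B\backslash B$, on which $|x_B-\cdot|^{n-\beta_i+\delta/m}\approx|B|^{(n-\beta_i+\delta/m)/n}$ is essentially constant. For $i\in\mathcal{I}_2$ I would use the doubling of $v_i^{-p_i'}$: picking $z$ with $|z-x_B|=\tfrac{3}{2}R$ one has $B(z,R/2)\subseteq 2B\backslash B$ and $B\subseteq B(z,4R)=8\,B(z,R/2)$, so three applications of the doubling condition give $\int_B v_i^{-p_i'}\lesssim\int_{B(z,R/2)}v_i^{-p_i'}\le\int_{2B\backslash B}v_i^{-p_i'}$, whence
\[|B|^{-(n-\beta_i+\delta/m)/n}\Big(\int_B v_i^{-p_i'}\Big)^{1/p_i'}\lesssim\Big(\int_{2B\backslash B}\frac{v_i^{-p_i'}}{|x_B-y|^{(n-\beta_i+\delta/m)p_i'}}\,dy\Big)^{1/p_i'}\le\left\|\frac{v_i^{-1}\mathcal{X}_{\mathbb{R}^n\backslash B}}{|x_B-\cdot|^{n-\beta_i+\delta/m}}\right\|_{p_i'}.\]
For $i\in\mathcal{I}_1$ the analogous fact is that the $\mathrm{RH}_\infty$ condition on $v_i^{-1}$ forces $\|v_i^{-1}\mathcal{X}_{\lambda B'}\|_\infty\approx\|v_i^{-1}\mathcal{X}_{B'}\|_\infty$ for every ball $B'$ and every fixed $\lambda>1$ (for $\lambda$ close to $1$ this follows by absorbing the contribution of the thin annulus $\lambda B'\backslash B'$, and then iterating); applying this with the same $z$ gives $\|v_i^{-1}\mathcal{X}_B\|_\infty\lesssim\|v_i^{-1}\mathcal{X}_{B(z,R/2)}\|_\infty\le\|v_i^{-1}\mathcal{X}_{2B\backslash B}\|_\infty$, which is exactly what is needed. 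Combining the $m$ factors yields \eqref{eq: condicion H_m(p,beta,delta tilde)-cal}. The whole argument runs parallel to the proof of Lemma~2.1 in \cite{BPR22(2)}, only the exponents being different, and the main obstacle is precisely the transfer from local to global information on the weights just described.
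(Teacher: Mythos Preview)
Your proposal is correct and follows the same line of argument as the paper, which simply refers the reader to Lemma~2.1 in \cite{BPR22(2)}; the split into local and global parts together with the transfer of mass to the annulus $2B\backslash B$ via doubling (for $i\in\mathcal{I}_2$) and via the $\mathrm{RH}_\infty$ ``sup-doubling'' property (for $i\in\mathcal{I}_1$) is exactly the intended mechanism. Your absorption argument for the $\mathrm{RH}_\infty$ step is valid as written; an equivalent route is to note that $\mathrm{RH}_\infty\subset A_\infty$ gives doubling of $v_i^{-1}$, from which $\sup_{2B'}v_i^{-1}\leq C|2B'|^{-1}\int_{2B'}v_i^{-1}\lesssim|B'|^{-1}\int_{B'}v_i^{-1}\leq\sup_{B'}v_i^{-1}$ follows directly.
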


The following theorem allows us to describe the region where we can find nontrivial pairs in $\mathcal{H}_m(\vec{p},\beta,\tilde\delta)$ in terms of the parameters $p$, $\beta$ and $\tilde\delta$.
	
\begin{teo}\label{teo: ejemplos para la clase H_m(p,beta,delta)-cal}
	Given $\delta\in\mathbb{R}$ and $0<\beta<\min\{mn,mn+\delta\}$, there exist pairs of weights $(w,\vec{v})$ satisfying \eqref{eq: condicion H_m(p,beta,delta tilde)-cal} for every $\vec{p}$ and $\tilde\delta$ such that $\tilde\delta\leq \min\{\delta,\beta-n/p\}$, excluding the case $\tilde\delta=\delta$ when $\beta-n/p=\delta$.
\end{teo}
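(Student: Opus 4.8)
The plan is to build explicit nontrivial pairs $(w,\vec v)$ by reducing, via Lemma~\ref{lema: equivalencia con local y global}, to checking only the global condition~\eqref{eq: condicion global}, and then to choose all the weights to be suitable powers of $|x|$ (or $(1+|x|)$) centered at the origin. First I would fix the target exponents: given $\vec p$ and $\tilde\delta\le\min\{\delta,\beta-n/p\}$ (with the stated exclusion), I want $v_i^{-1}=|x|^{a_i}$ for constants $a_i\ge 0$ and $w^{-1}=|x|^{c}$ with $c\ge 0$; the nonnegativity of the exponents guarantees, by Remark~\ref{obs: potencias positivas en RH inf}, that $v_i^{-1}\in\mathrm{RH}_\infty$ (so in particular $v_i^{-p_i'}\in\mathrm{RH}_\infty$ is doubling), which is exactly the hypothesis needed to invoke Lemma~\ref{lema: equivalencia con local y global}. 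One then only has to verify~\eqref{eq: condicion global}. I would single out one index, say $i=1$, to carry all the ``slack'': set $a_i=0$ for $i\ge 2$ (so $v_i\equiv 1$ there), and use $a_1$ together with $c$ as the two free parameters to be tuned.

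The core computation is estimating, for a ball $B=B(x_B,R)$, the two types of quantities appearing in~\eqref{eq: condicion global}: the local average $w^{-1}(B)=\int_B|x|^{c}$, and the global tail integrals $\int_{\mathbb R^n\setminus B}|x|^{a_ip_i'}/|x_B-x|^{(n-\beta_i+\delta/m)p_i'}\,dx$ (with the obvious sup version when $p_i=1$). These are standard: one splits into the regime $|x_B|\lesssim R$ versus $|x_B|\gg R$ and uses polar coordinates. Convergence of the tail integral for $i$ forces the constraint $a_ip_i'-(n-\beta_i+\delta/m)p_i'<-n$, i.e. an upper bound on $a_i$, which together with $a_i\ge0$ pins down the admissible window; the homogeneity bookkeeping then dictates the value of $c$ in terms of $\tilde\delta,\beta,p$ and the $a_i$. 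The key point is that the inequality $\tilde\delta\le\min\{\delta,\beta-n/p\}$ is precisely what makes this window nonempty: the bound $\tilde\delta\le\beta-n/p$ is what allows $c\ge 0$ (equivalently, $w^{-1}$ locally integrable rather than being forced to have a non-integrable singularity), while $\tilde\delta\le\delta$ controls the exponent $n-\beta_i+\delta/m$ governing the tail decay; the borderline exclusion $\tilde\delta=\delta=\beta-n/p$ is where both constraints become simultaneously tight and collapse to the trivial-weight case ruled out by the previous theorem, so it must be removed.

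I would organize the verification as: (i) reduce to~\eqref{eq: condicion global} via Lemma~\ref{lema: equivalencia con local y global} after noting the $\mathrm{RH}_\infty$ property of the power weights; (ii) in the case $|x_B|\le 2R$, bound $w^{-1}(B)\gtrsim R^{n+c}$ from below (using $c\ge0$) and bound each tail factor by $R^{-(n-\beta_i+\delta/m)p_i'+a_ip_i'+n}$ up to a constant depending on convergence, then check the resulting power of $R$ cancels to a constant; (iii) in the case $|x_B|>2R$, estimate $w^{-1}(B)\approx R^n|x_B|^{c}$ and split each tail into the near part $\{|x|\le 2|x_B|\}$ and far part $\{|x|>2|x_B|\}$, obtaining a bound of the form $|x_B|^{a_i-n+\beta_i-\delta/m+n/p_i'}$ up to constants, and again verify the exponents of both $R$ and $|x_B|$ vanish after taking the product. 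Step (iii), the off-center regime, is the main obstacle: one must carefully track the interplay between the decay coming from $|x_B-x|^{-(n-\beta_i+\delta/m)p_i'}$ and the growth of $|x|^{a_ip_i'}$ on the annulus $|x|\sim|x_B|$, and confirm that summing the $m$ exponents against $-(1+(\delta-\tilde\delta)/n)n$ and against the homogeneity of $w^{-1}(B)$ produces exactly zero net power — which is where the identity $\tilde\delta=\beta-n/p$ would appear on the boundary and the strict inequality gives room elsewhere. Finally I would remark that dropping to $\tilde\delta<\min\{\delta,\beta-n/p\}$ only improves every estimate (one can simply keep the same weights and the constraints become slack), and on the line $\tilde\delta=\beta-n/p<\delta$ one chooses $c=0$, i.e. $w^{-1}\equiv 1$, so that $w^{-1}(B)\approx|B|$ and the construction still goes through.
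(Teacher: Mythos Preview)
Your reduction via Lemma~\ref{lema: equivalencia con local y global} and the two-regime split $|x_B|\lesssim R$ versus $|x_B|\gg R$ are exactly the right scaffolding, and power weights are indeed the right objects. But the construction itself has a genuine gap: concentrating all the freedom in one coordinate by setting $a_i=0$ for $i\ge 2$ does \emph{not} work in general. The tail integral for index $i$,
\[
\int_{\mathbb{R}^n\setminus B}\frac{|y|^{a_ip_i'}}{|x_B-y|^{(n-\beta_i+\delta/m)p_i'}}\,dy,
\]
converges at infinity only when $a_i< n/p_i-\beta_i+\delta/m=:q_i$. With $a_i=0$ this forces $q_i>0$, i.e.\ $n/p_i>\beta_i-\delta/m$. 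But nothing in the hypotheses guarantees this: take for instance $m=2$, $n=1$, $\beta=3/2$, $\delta=0$, $p_1=p_2=2$, $\beta_i=\beta/m$; then $q_1=q_2=-1/4<0$ and \emph{both} tails diverge when $a_i=0$, so no single free parameter $a_1$ can rescue the construction. More generally, whenever $\beta>\delta$ and some $p_i$ is large, several $q_i$ will be nonpositive simultaneously.

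A second, related point: your blanket requirement $a_i\ge 0$ is imposed so that $v_i^{-1}\in\mathrm{RH}_\infty$, but Lemma~\ref{lema: equivalencia con local y global} only asks this for $i\in\mathcal{I}_1$; for $i\in\mathcal{I}_2$ one only needs $v_i^{-p_i'}$ doubling, which any power $|x|^{\alpha}$ with $\alpha>-n$ satisfies. The paper's proof exploits this: for $\tilde\delta<\beta-mn$ it writes $v_i(x)=|x|^{\tau_i}$ and, crucially, for each $i\in\mathcal{I}_2$ with $q_i<0$ it chooses $\tau_i\in(-q_i,\,n/p_i')$, so that $v_i^{-1}=|x|^{-\tau_i}$ has a \emph{negative} exponent. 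This is precisely what restores tail convergence. The weight must therefore be spread across \emph{all} indices, with the sign of each exponent dictated by the sign of $q_i$; the delicate step is then showing that in the off-center regime the net power of $R/|x_B|$ that remains,
\[
\nu-m_1\tau \;=\; \sum_{i\in\mathcal{I}_2,\,q_i<0}(\tau_i+q_i)+\sum_{i\in\mathcal{I}_2,\,q_i\ge 0}\tau_i \;-\; m_1\tau,
\]
is strictly positive. Finally, the paper does not build examples over the whole region at once: for $\beta-mn\le\tilde\delta\le\min\{\delta,\beta-n/p\}$ it quotes the inclusion $\mathbb{H}_m\subset\mathcal{H}_m$ and the examples already produced for $\mathbb{H}_m$, and only constructs new ones when $\tilde\delta<\beta-mn$.
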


The figure below shows the colored area where we can find pairs of weights in $\mathcal{H}_m(\vec{p},\beta,\tilde\delta)$ and the values of the parameters involved that enclose it. Throughout this analysis $\delta$ is a fixed real number and the comparisons show the possible cases for $\beta$ with respect to $\delta$. 

\begin{center} 
	\begin{tikzpicture}[scale=0.75]
	% CASO BETA>DELTA
	\node[above] at (-4,5.5) {$\beta>\delta$};
	\draw [-stealth, thick] (-6,-7)--(-6,5);
	\draw [-stealth, thick] (-7,0)--(-1,0);
	\draw [thick] (-6.05,3)--(-5.95, 3);
	\node [left] at (-6,5) {$\tilde\delta$};
	\node [left] at (-6,3) {$\delta$};
	\node [below] at (-1,0) {$1/p$};
	\node [below] at (-2,0) {$m$};
	\draw [thick] (-2,0.05)--(-2,-0.05);
	\draw [thick] (-6.05,-4)--(-5.95, -4);
	\node [left] at (-6,-4) {$\beta-mn$};
	\draw [fill=atomictangerine, fill opacity=0.5] (-6,-7)--(-6,3)--(-4,3)--(-2,-4)--(-2,-7)--cycle;
	\draw [color=white] (-2,-7)--(-6,-7);
	% Y=-7/2x-11
	%\draw [dashed, thick] (-6,1)--(-3.4286,1);
	%\node [left] at (-6,1) {$\tau$};
	\draw [fill=white] (-4,3) circle (0.08cm);
	\node [right] at (-3.5,2) {$\tilde\delta=\beta-n/p$};
	% CASO BETA=DELTA
	\node[above] at (3,5.5) {$\beta=\delta$};
	\draw [-stealth, thick] (1,-7)--(1,5);
	\draw [-stealth, thick] (0,0)--(6,0);
	\draw [thick] (0.95,3)--(1.05, 3);
	\node [left] at (1,5) {$\tilde\delta$};
	\node [left] at (1,3) {$\delta$};
	\node [below] at (6,0) {$1/p$};
	\node [below] at (5,0) {$m$};
	\draw [thick] (5,0.05)--(5,-0.05);
	\draw [thick] (0.95,-4)--(1.05, -4);
	\node [left] at (1,-4) {$\beta-mn$};
	\draw [fill=atomictangerine, fill opacity=0.5] (1,-7)--(1,3)--(5,-4)--(5,-7)--cycle;
	\draw [color=white] (5,-7)--(1,-7);
	% Y=-7/4x+19/4
	%\draw [dashed, thick] (1,1)--(2.152857,1);
	%\node [left] at (1,1) {$\tau$};
	\draw [fill=white] (1,3) circle (0.08cm);
	\node [right] at (2,2) {$\tilde\delta=\beta-n/p$};
	% CASO  BETA<DELTA
	\node[above] at (10,5.5) {$\beta<\delta$};
	\draw [-stealth, thick] (8,-7)--(8,5);
	\draw [-stealth, thick] (7,0)--(13,0);
	\draw [thick] (7.95,3)--(8.05, 3);
	\node [left] at (8,5) {$\tilde\delta$};
	\node [left] at (8,3) {$\delta$};
	\node [below] at (13,0) {$1/p$};
	\node [below] at (12,0) {$m$};
	\draw [thick] (12,0.05)--(12,-0.05);
	\draw [thick] (7.95,-4)--(8.05, -4);
	\node [left] at (8,-4) {$\beta-mn$};
	\draw [fill=atomictangerine, fill opacity=0.5] (8,-7)--(8,2)--(12,-4)--(12,-7)--cycle;
	\draw [color=white] (12,-7)--(8,-7);
	% Y=-3/2x+14
	%\draw [dashed, thick] (8,1)--(8.6667,1);
	%\node [left] at (8,1) {$\tau$};
	\node [right] at (8.5,2) {$\tilde\delta=\beta-n/p$};
	\end{tikzpicture}
\end{center}
\refstepcounter{BPR}\label{pag: regiones de pesos no triviales}

The proof of Theorem~\ref{teo: ejemplos para la clase H_m(p,beta,delta)-cal} requires a technical estimate of power functions that we state below.
 
\begin{lema}\label{lema: estimacion de la integral de |x|^a en una bola}
	Let $B=B(x_B,R)$ be a ball in $\mathbb{R}^n$ and $\alpha>-n$. Then 
	\[\int_B |x|^{\alpha}\,dx\approx R^n\left(\max\{R,|x_B|\}\right)^\alpha.\]
\end{lema}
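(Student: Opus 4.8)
The plan is to split into two cases according to whether the origin lies far from or close to the ball, writing $r=\max\{R,|x_B|\}$ throughout. In the \emph{far case} $|x_B|\ge 2R$, every $x\in B$ satisfies $|x_B|/2\le |x_B|-R\le |x|\le |x_B|+R\le 2|x_B|$, so $|x|\approx |x_B|=r$ on all of $B$; since $\alpha$ is a fixed exponent this yields $|x|^\alpha\approx r^\alpha$ there, and integrating over $B$ gives $\int_B|x|^\alpha\,dx\approx r^\alpha|B|\approx R^n r^\alpha$, the claimed two-sided bound (and this case needs no use of $\alpha>-n$).

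It remains to treat the \emph{near case} $|x_B|<2R$, where $R\le r<2R$, so $r\approx R$ and it suffices to prove $\int_B|x|^\alpha\,dx\approx R^{n+\alpha}$. For the upper bound one observes $B\subset B(0,|x_B|+R)\subset B(0,3R)$, and hence, in polar coordinates,
\[\int_B|x|^\alpha\,dx\le \int_{B(0,3R)}|x|^\alpha\,dx=c_n\int_0^{3R}s^{\alpha+n-1}\,ds=\frac{c_n}{\alpha+n}\,(3R)^{\alpha+n}\lesssim R^{n+\alpha};\]
the convergence of the last radial integral is exactly where the hypothesis $\alpha>-n$ is used.

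For the matching lower bound --- the only step requiring a small construction --- I would exhibit a ball $B'\subset B$ of radius $\approx R$ on which $|x|\approx R$. If $x_B=0$ this is immediate from the polar-coordinate computation above; otherwise, with $e=x_B/|x_B|$ and $x^*=x_B+\tfrac{R}{2}e$, one has $|x^*-x_B|=R/2$, so $B':=B(x^*,R/4)\subset B$, while $|x^*|=|x_B|+R/2$ forces $R/4\le |x^*|-R/4\le |x|\le |x^*|+R/4=|x_B|+3R/4<3R$ for every $x\in B'$. Thus $|x|\approx R$, hence $|x|^\alpha\approx R^\alpha$, on $B'$, and therefore $\int_B|x|^\alpha\,dx\ge \int_{B'}|x|^\alpha\,dx\gtrsim R^\alpha|B'|\gtrsim R^{n+\alpha}\approx R^n r^\alpha$. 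I do not expect a genuine obstacle: the argument is elementary, and the only points to keep in mind are bookkeeping ones --- that the constants in $|x|\approx r\Rightarrow|x|^\alpha\approx r^\alpha$ depend on the fixed $\alpha$, and that the finiteness of $\int_{B(0,3R)}|x|^\alpha\,dx$, hence the whole estimate, rests on $\alpha>-n$.
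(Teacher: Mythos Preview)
Your proof is correct. The paper itself states this lemma without proof, treating it as a standard technical fact, so there is no argument to compare against; your two-case split (far/near) with the explicit sub-ball construction for the lower bound in the near case is a clean and complete justification.
One cosmetic remark: in the near-case lower bound you could dispense with the separate treatment of $x_B=0$, since when $x_B=0$ the direct polar-coordinate identity $\int_{B(0,R)}|x|^\alpha\,dx=\frac{c_n}{n+\alpha}R^{n+\alpha}$ already gives both bounds at once; alternatively you could pick any unit vector $e$ in that case and keep a single construction.
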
 

\medskip

\begin{proof}[Proof of Theorem~\ref{teo: ejemplos para la clase H_m(p,beta,delta)-cal}]
In \cite{BPRe} the authors exhibited examples of pair of weights in the class $\mathbb{H}_m(\vec{p},\beta, \tilde\delta)$. Since $\mathbb{H}_m(\vec{p},\beta, \tilde\delta)\subset \mathcal{H}_m(\vec{p},\beta, \tilde\delta)$, these examples cover the region $\beta-mn\leq \tilde\delta\leq \min\{\delta, \beta-n/p\}$, excluding the case $\delta=\beta-n/p$ when $\tilde\delta=\delta$ (see Theorem~6.6 in \cite{BPRe}). Therefore, it will be enough to exhibit examples when $\tilde\delta<\beta-mn$.

Recall that $\mathcal{I}_1=\{1\leq i\leq m: p_i=1\}$ and $\mathcal{I}_2=\{1\leq i\leq m: p_i>1\}$. We first assume that $\mathcal{I}_1\neq \emptyset$ and define $q_i=~n/p_i+(\delta-~\beta)/m$. Since $\beta<mn+\delta$, we can pick $-q_i<\tau_i<n/p_i'$ for every $i\in\mathcal{I}_2$ and $q_i<0$, and $0<\tau_i<n/p_i'$ if $i\in\mathcal{I}_2$ and $q_i\geq 0$. This election implies that 
\[\nu=\sum_{i\in\mathcal{I}_2,q_i\geq 0}\tau_i+\sum_{i\in\mathcal{I}_2,q_i<0}(\tau_i+q_i)>0.\]
We now choose  
\[0<\tau<\min\left\{\frac{\nu}{m_1}, n+\frac{\delta-\beta}{m}\right\},\]
and take $\tau_i=-\tau$ for every $i\in\mathcal{I}_1$. Let $\eta=\tilde\delta+\sum_{i=1}^m\tau_i+n/p-\beta$ and define
\[w(x)=|x|^\eta\quad\textrm{ and }\quad v_i(x)=|x|^{\tau_i},\quad \textrm{ for } 1\leq i\leq m.\]
 Observe that
 \[\eta=\tilde\delta+\sum_{i=1}^m\tau_i+n/p-\beta<\tilde\delta+\sum_{i=1}^m\frac{n}{p_i'}+\frac{n}{p}-\beta=\tilde\delta+mn-\beta<0,\]
 since $\tilde\delta<\beta-mn$, so $w^{-1}$ is a locally integrable function. On the other hand, $v_i^{-1}\in \mathrm{RH}_\infty$  for  $i\in\mathcal{I}_1$ by virtue of Remark~\ref{obs: potencias positivas en RH inf}, so the same conclusion holds for these weights. For $i\in\mathcal{I}_2$ we also have that $v_i^{-p_i'}$ is locally integrable since
 $\tau_i<n/p_i'$. Consequently, by Lemma~\ref{lema: equivalencia con local y global}, it will be enough to show that condition \eqref{eq: condicion global} holds, that is, we need to check that there exists a positive constant $C$ such that the inequality
\begin{equation}\label{eq: teo: ejemplos para Hcal - eq1}
\frac{|B|^{1+(\delta-\tilde\delta)/n}}{w^{-1}(B)}\prod_{i\in\mathcal{I}_1}\left\|\frac{v_i^{-1}\mathcal{X}_{\mathbb{R}^n\backslash B}}{|x_B-\cdot|^{n-\beta/m+\delta/m}}\right\|_\infty\,\prod_{i\in\mathcal{I}_2}\left(\int_{\mathbb{R}^n\backslash B}\frac{v_i^{-p_i'}}{|x_B-\cdot|^{(n-\beta/m+\delta/m)p_i'}}\right)^{1/p_i'}\leq C
\end{equation}
holds for every ball $B=B(x_B,R)$. 

We shall first consider the case $|x_B|\leq R$. By Lemma~\ref{lema: estimacion de la integral de |x|^a en una bola} we obtain
\begin{equation}\label{eq: teo: ejemplos para Hcal - eq2}
\frac{|B|^{1+(\delta-\tilde\delta)/n}}{w^{-1}(B)}\lesssim R^{\delta-\tilde\delta+\eta}.
\end{equation}
Also notice that, if $i\in\mathcal{I}_1$ and $B_k=B(x_B,2^kR)$, for $k\in\mathbb{N}$, we have
\begin{align*}
\left\|\frac{v_i^{-1}\mathcal{X}_{\mathbb{R}^n\backslash B}}{|x_B-\cdot|^{n-\beta/m+\delta/m}}\right\|_\infty&\lesssim \sum_{k=0}^\infty \left\|\frac{v_i^{-1}\mathcal{X}_{B_{k+1}\backslash B_k}}{|x_B-\cdot|^{n-\beta/m+\delta/m}}\right\|_\infty\\
&\lesssim \sum_{k=0}^\infty \left(2^kR\right)^{-\tau_i-n+\beta/m-\delta/m}\\
&\lesssim   R^{-\tau_i-n+\beta/m-\delta/m},
\end{align*}
since $-\tau_i-n+\beta/m-\delta/m<0$. This yields
\begin{equation}\label{eq: teo: ejemplos para Hcal - eq3}
\prod_{i\in\mathcal{I}_1}\left\|\frac{v_i^{-1}\mathcal{X}_{\mathbb{R}^n\backslash B}}{|x_B-\cdot|^{n-\beta/m+\delta/m}}\right\|_\infty\lesssim R^{-\sum_{i\in\mathcal{I}_1}(\tau_i+q_i)}.
\end{equation}
Finally, since $\tau_i+q_i>0$ for every $i\in\mathcal{I}_2$, Lemma~\ref{lema: estimacion de la integral de |x|^a en una bola} allows us to get 
\begin{align*}
\left(\int_{\mathbb{R}^n\backslash B} \frac{v_i^{-p_i'}(y)}{|x_B-y|^{(n-\beta/m+\delta/m)p_i'}}\,dy\right)^{1/p_i'}&\lesssim \sum_{k=0}^\infty (2^kR)^{-n+\beta/m-\delta/m}\left(\int_{B_{k+1}\backslash B_k} |y|^{-\tau_ip_i'}\,dy\right)^{1/p_i'}\\
	&\lesssim \sum_{k=0}^\infty (2^kR)^{-n+\beta/m-\delta/m-\tau_i+n/p_i'}\\
	&\lesssim R^{-n/p_i+\beta/m-\delta/m-\tau_i}.
\end{align*}
This yields
\begin{equation}\label{eq: teo: ejemplos para Hcal - eq4}
\prod_{i\in\mathcal{I}_2}\left(\int_{\mathbb{R}^n\backslash B}\frac{v_i^{-p_i'}(y)}{|x_B-y|^{(n-\beta/m+\delta/m)p_i'}}\,dy\right)^{1/p_i'}\lesssim R^{-\sum_{i\in\mathcal{I}_2}(\tau_i+q_i)}.
\end{equation}
By combining \eqref{eq: teo: ejemplos para Hcal - eq2}, \eqref{eq: teo: ejemplos para Hcal - eq3} and \eqref{eq: teo: ejemplos para Hcal - eq4}, the left-hand side of \eqref{eq: teo: ejemplos para Hcal - eq1} is bounded by a multiple constant of 
\[R^{\delta-\tilde\delta+\eta-\sum_{i=1}^m(q_i+\tau_i)}\approx 1.\] 
Let us now consider the case $|x_B|>R$. Again, by Lemma~\ref{lema: estimacion de la integral de |x|^a en una bola}, we have that
\begin{equation}\label{eq: teo: ejemplos para Hcal - eq5}
\frac{|B|^{1+(\delta-\tilde\delta)/n}}{w^{-1}(B)}\lesssim R^{\delta-\tilde\delta}|x_B|^\eta\lesssim R^{\delta-\tilde\delta+\eta},
\end{equation}
because $\eta<0$. Since $|x_B|>R$, there exists a number $N\in\mathbb{N}$ such that $2^NR<|x_B|\leq 2^{N+1}R$. When $i\in\mathcal{I}_1$ we write
\begin{align*}
\left\|\frac{v_i^{-1}\mathcal{X}_{\mathbb{R}^n\backslash B}}{|x_B-\cdot|^{n-\beta/m+\delta/m}}\right\|_\infty&\lesssim \sum_{k=0}^N \left\|\frac{v_i^{-1}\mathcal{X}_{B_{k+1}\backslash B_k}}{|x_B-\cdot|^{n-\beta/m+\delta/m}}\right\|_\infty+\sum_{k=N+1}^\infty \left\|\frac{v_i^{-1}\mathcal{X}_{B_{k+1}\backslash B_k}}{|x_B-\cdot|^{n-\beta/m+\delta/m}}\right\|_\infty\\
&=S_1^i+S_2^i.
\end{align*}
We first observe that
\[S_1^i\lesssim |x_B|^{-\tau_i}\sum_{k=0}^N \left(2^kR\right)^{-n+\beta/m-\delta/m}\lesssim |x_B|^{-\tau_i}R^{-n+\beta/m-\delta/m}=|x_B|^{-\tau_i}R^{-q_i}\]
and
\begin{align*}
S_2^i\lesssim \sum_{k=N+1}^\infty \left(2^kR\right)^{-\tau_i-n+\beta/m-\delta/m}
&\lesssim \left(2^{N}R\right)^{-\tau_i-n+\beta/m-\delta/m}\sum_{k=0}^\infty 2^{k(-\tau_i-n+\beta/m-\delta/m)}\\
&\lesssim |x_B|^{-\tau_i}R^{-n+\beta/m-\delta/m}\\
&=|x_B|^{-\tau_i}R^{-q_i}.
\end{align*}
The corresponding bounds for $S_1^i$ and $S_2^i$ lead to
\begin{equation}\label{eq: teo: ejemplos para Hcal - eq6}
\prod_{i\in\mathcal{I}_1}\left\|\frac{v_i^{-1}\mathcal{X}_{\mathbb{R}^n\backslash B}}{|x_B-\cdot|^{n-\beta/m+\delta/m}}\right\|_\infty\lesssim |x_B|^{-\sum_{i\in\mathcal{I}_1}\tau_i}\,\,R^{-\sum_{i\in\mathcal{I}_1}q_i}.
\end{equation}

For $i\in\mathcal{I}_2$ we proceed in a similar way by splitting the integral as follows
\begin{align*}
\left(\int_{\mathbb{R}^n\backslash B} \frac{v_i^{-p_i'}(y)}{|x_B-y|^{(n-\gamma/m+1/m)p_i'}}\,dy\right)^{1/p_i'}&\lesssim \sum_{k=0}^\infty(2^{k}R)^{-n+\gamma/m-1/m}\left(\int_{B_k} |y|^{-\beta_ip_i'}\,dy\right)^{1/p_i'}\\
&=\sum_{k=0}^N+\sum_{k=N+1}^\infty\\
	&=S_1^i+S_2^i.
\end{align*}\label{pag: estimacion del producto para i fuera de I_1, |x_B|>R}
 We estimate the sum $S_1^i+S_2^i$ by distinguishing into the cases $q_i<0$, $q_i=0$ and $q_i>0$. When $q_i<0$, by Lemma~\ref{lema: estimacion de la integral de |x|^a en una bola} we obtain
\begin{align*}
	S_1^i&\lesssim \sum_{k=0}^N(2^{k}R)^{-n+\beta/m-\delta/m+n/p_i'}|x_B|^{-\tau_i}\\
	&\lesssim |x_B|^{-\tau_i}R^{-q_i}\sum_{k=0}^N 2^{-kq_i}\\
	&\lesssim |x_B|^{-\tau_i}(2^NR)^{-q_i}\\
	&\lesssim |x_B|^{-\tau_i-q_i},
\end{align*}
because we assumed $q_i<0$. For $S_2^i$ we apply again Lemma~\ref{lema: estimacion de la integral de |x|^a en una bola} in order to get
\begin{align*}
	S_2^i&\lesssim \sum_{k=N+1}^\infty(2^{k}R)^{-n+\beta/m-\delta/m+n/p_i'-\tau_i}\\
	&\lesssim \sum_{k=N+1}^\infty \left(2^{k}R\right)^{-\tau_i-q_i}\\
	&= \left(2^{N+1}R\right)^{-\tau_i-q_i}\sum_{k=0}^\infty 2^{-k(\tau_i+q_i)}\\
	&\lesssim |x_B|^{-\tau_i-q_i},
\end{align*}
since $q_i+\tau_i>0$. This yields
\begin{equation}\label{eq: teo: ejemplos para Hcal - eq7}
S_1^i+S_2^i\lesssim |x_B|^{-\tau_i-q_i}
\end{equation}
when $q_i<0$.

We now assume that $q_i=0$. By proceeding in a similar way as above, we obtain
\[S_1^i\lesssim |x_B|^{-\tau_i}N\lesssim |x_B|^{-\tau_i}\log_2\left(\frac{|x_B|}{R}\right),\]
and
\[S_2^i\lesssim |x_B|^{-\tau_i}\]
since $\tau_i>0$ when $q_i=0$. Therefore,
\begin{equation}\label{eq: teo: ejemplos para Hcal - eq8}
	S_1^i+S_2^i\lesssim |x_B|^{-\tau_i}\left(1+\log_2\left(\frac{|x_B|}{R}\right)\right)\lesssim |x_B|^{-\tau_i}\log_2\left(\frac{|x_B|}{R}\right).
\end{equation}

Finally, assume that $q_i>0$. For $S_2^i$ we proceed exactly as we did for the case $q_i<0$ and get the same bound. On the other hand, for $S_1^i$ we have that
\begin{align*}
	S_1^i&\lesssim \sum_{k=0}^N(2^{k}R)^{-n+\beta/m-\delta/m+n/p_i'}|x_B|^{-\tau_i}\\
	&\lesssim |x_B|^{-\tau_i}R^{-q_i}\sum_{k=0}^N 2^{-kq_i}\\
	&\lesssim |x_B|^{-\tau_i}\left(2^NR\right)^{-q_i}2^{Nq_i}\\
	&\lesssim |x_B|^{-\tau_i-q_i}2^{Nq_i}.
\end{align*}\label{pag: estimacion de S_1^i y S_2^i,  theta_i>0}
Therefore, if $i\in\mathcal{I}_2$ and $q_i>0$, we get
\begin{equation}\label{eq: teo: ejemplos para Hcal - eq9}
	S_1^i+S_2^i\lesssim |x_B|^{-\tau_i-q_i}\left(1+2^{Nq_i}\right)\lesssim 2^{Nq_i}|x_B|^{-\tau_i-q_i}. 
\end{equation}
By combining the estimates in \eqref{eq: teo: ejemplos para Hcal - eq7},\eqref{eq: teo: ejemplos para Hcal - eq8} and \eqref{eq: teo: ejemplos para Hcal - eq9}  we obtain
\begin{align*}	\prod_{i\in\mathcal{I}_2}\left(\int_{\mathbb{R}^n\backslash B} \frac{v_i^{-p_i'}(y)}{|x_B-y|^{(n-\beta/m+\delta/m)p_i'}}\,dy\right)^{1/p_i'}&\lesssim \prod_{i\in\mathcal{I}_2, q_i<0} |x_B|^{-\tau_i-q_i} \prod_{i\in\mathcal{I}_2, q_i=0} |x_B|^{-\tau_i}\log_2\left(\frac{|x_B|}{R}\right) \\
	&\qquad\times\prod_{i\in\mathcal{I}_2, q_i>0} |x_B|^{-\tau_i-q_i}2^{Nq_i} \\
	&\lesssim |x_B|^{-\sum_{i\in\mathcal{I}_2}(\tau_i+q_i)}2^{N\sum_{i\in\mathcal{I}_2,q_i> 0}q_i}\\
	&\qquad \times\left(\log_2\left(\frac{|x_B|}{R}\right)\right)^{\#\{i\in\mathcal{I}_2, q_i=0\}}.
\end{align*}

The estimate above combined with \eqref{eq: teo: ejemplos para Hcal - eq5} and \eqref{eq: teo: ejemplos para Hcal - eq6} allows us to bound the left-hand side of \eqref{eq: teo: ejemplos para Hcal - eq1} by a multiple constant of
\[R^{\delta-\tilde\delta+\eta} |x_B|^{-\sum_{i\in\mathcal{I}_1}\tau_i}R^{-\sum_{i\in\mathcal{I}_1}q_i}|x_B|^{-\sum_{i\in\mathcal{I}_2}(\tau_i+q_i)}2^{N\sum_{i\in\mathcal{I}_2,q_i> 0}q_i}
\left(\log_2\left(\frac{|x_B|}{R}\right)\right)^{\#\{i\in\mathcal{I}_2, q_i=0\}}\]
or equivalently by
\begin{equation}\label{eq: teo: ejemplos para Hcal - eq10}
\left(\frac{R}{|x_B|}\right)^{\delta-\tilde\delta+\eta-\sum_{i\in\mathcal{I}_1}q_i-\sum_{i\in\mathcal{I}_2,q_i> 0}q_i}\left(\log_2\left(\frac{|x_B|}{R}\right)\right)^{\#\{i\in\mathcal{I}_2, q_i=0\}}.	
\end{equation}
Since $\sum_{i=1}^m q_i=n/p+\delta-\beta$ and $\eta=\tilde\delta+\sum_{i=1}^m\tau_i+n/p-\beta$, then the exponent of $R/|x_B|$ is equal to 
\begin{align*}
	\delta-\tilde\delta+\eta-\sum_{i\in\mathcal{I}_1}q_i-\sum_{i\in\mathcal{I}_2,q_i> 0}q_i&=\delta+\sum_{i=1}^m\tau_i+n/p-\beta-\sum_{i\in\mathcal{I}_1}q_i-\sum_{i\in\mathcal{I}_2,q_i>0}q_i\\
	&=\sum_{i\in\mathcal{I}_2,q_i<0}(\tau_i+q_i)+\sum_{i\in\mathcal{I}_1}\tau_i+\sum_{i\in\mathcal{I}_2,q_i\geq 0}\tau_i\\
	&=\nu-m_1\tau,
\end{align*}
%\[\sum_{i\in\mathcal{I}_2,\theta_i<0}(\beta_i+\theta_i)+\sum_{i\in\mathcal{I}_1}\beta_i+\sum_{i\in\mathcal{I}_2,\theta_i\geq 0}\beta_i=\nu-m_1\beta>0,\]
which is positive from our election of $\tau$. Since $\log t\lesssim \varepsilon^{-1}t^\varepsilon$ for every $t\geq 1$ and every $\varepsilon>0$, we can bound \eqref{eq: teo: ejemplos para Hcal - eq10} by a multiple constant of
\[\left(\frac{R}{|x_B|}\right)^{\nu-m_1\tau-\varepsilon\#\{i\in\mathcal{I}_2, q_i=0\}},\]
and this exponent is positive provided we choose $\varepsilon>0$ sufficiently small. The proof is complete when $\mathcal{I}_1\neq\emptyset$. Otherwise, we can follow the same steps and define the same parameters, omitting the factor corresponding to $\mathcal{I}_1$. This concludes the proof. 
\end{proof}

We finish this section with the proof of Theorem~\ref{teo: consecuencia de pesos iguales}.

\begin{proof}[Proof of Theorem~\ref{teo: consecuencia de pesos iguales}]
 Since $\vec{v}\in\mathcal{H}_m(\vec{p},\beta,\tilde\delta)$, condition \eqref{eq: condicion local} implies that 
\begin{equation}\label{eq: teo: caso de pesos iguales - eq1}
|B|^{-\tilde\delta/n+\beta/n-1/p}\prod_{i\in\mathcal{I}_1}\|v_i^{-1}\mathcal{X}_B\|_\infty\,\prod_{i\in\mathcal{I}_2}\left(\frac{1}{|B|}\int_B v_i^{-p_i'}\right)^{1/p_i'}\leq \frac{C}{|B|}\int_B \prod_{i=1}^m v_i^{-1}.
\end{equation}
Notice that $\sum_{i=1}^m \xi/p_i'=1$. Then we can apply Hölder inequality with $p_i'/\xi$ in order to get 
\[\left(\frac{1}{|B|}\int_B \left(\prod_{i=1}^m v_i^{-1}\right)^\xi\right)^{1/\xi}\leq \prod_{i\in\mathcal{I}_1}\|v_i^{-1}\mathcal{X}_B\|_\infty\,\prod_{i\in\mathcal{I}_2}\left(\frac{1}{|B|}\int_B v_i^{-p_i'}\right)^{1/p_i'}.\]
By multiplying each side of this inequality by $|B|^{-\tilde\delta/n+\beta/n-1/p}$ and using \eqref{eq: teo: caso de pesos iguales - eq1} we arrive to
\[|B|^{-\tilde\delta/n+\beta/n-1/p}\left(\frac{1}{|B|}\int_B \left(\prod_{i=1}^m v_i^{-1}\right)^\xi\right)^{1/\xi}\leq \frac{C}{|B|}\int_B \prod_{i=1}^m v_i^{-1}.\]
Therefore, we can conclude that
\[|B|^{-\tilde\delta/n+\beta/n-1/p}\leq C\]
for every ball $B$, since $\xi>1$. Then we must have that $\tilde\delta/n=\beta/n-1/p$, as desired.	
\end{proof}

%--------------------------------------------------------------------
% AGRADECIMIENTOS
%--------------------------------------------------------------------
\section*{Acknowledgements}

We would like to specially thanks to Ph. D. Gladis Pradolini for suggesting us these problems, as well as giving us useful advices for redaction and bibliography. 

%--------------------------------------------------------------------
% DECLARACIONES
%--------------------------------------------------------------------
% \section*{Declarations}

% \subsection*{Ethical approval}
% Not applicable.
% \subsection*{Competing interests}
% Not applicable.
% \subsection*{Author's contributions}
% Not applicable.
% \subsection*{Funding}
% The authors were supported by PICT 2019 Nº 389 (ANPCyT),\\  CAI+D 2020 50320220100210 (UNL) and PICT 2018 Nº 02501.
% \subsection*{Availability of data and materials}
% Not applicable.

%--------------------------------------------------------------------
% BIBLIOGRAFIA
%--------------------------------------------------------------------

%\bibliographystyle{amsplain}
%\bibliography{bibliografia}

\begin{thebibliography}{10}

\bibitem{AHIV}
H.~Aimar, S.~Hartzstein, B.~Iaffei, and B.~Viviani, \emph{The {R}iesz potential
  as a multilinear operator into general {$\rm BMO_\beta$} spaces}, vol. 173,
  2011, Problems in mathematical analysis. No. 55, pp.~643--655.

\bibitem{BPR22(2)}
Fabio Berra, Gladis Pradolini, and Wilfredo Ramos, \emph{Two-weighted estimates
  of the multilinear fractional integral operator between weighted {L}ebesgue
  and {L}ipschitz spaces with optimal parameters}, Rev. Un. Mat. Argentina,
  Accepted for publication, 2023.

\bibitem{BPR22}
\bysame, \emph{Optimal parameters related with continuity properties of the
  multilinear fractional integral operator between {Lebesgue} and {Lipschitz}
  spaces}, Positivity \textbf{27} (2023), no.~2, 35 (English), Id/No 22.

\bibitem{BPRe}
Fabio Berra, Gladis Pradolini, and Jorgelina Recchi, \emph{Some extensions of
  classes involving pair of weights related to the boundedness of multilinear
  commutators associated to generalized fractional integral operators},
  Available at \url{https://arxiv.org/abs/2209.14103}.

\bibitem{HSV}
E.~Harboure, O.~Salinas, and B.~Viviani, \emph{Boundedness of the fractional
  integral on weighted {L}ebesgue and {L}ipschitz spaces}, Trans. Amer. Math.
  Soc. \textbf{349} (1997), no.~1, 235--255.

\bibitem{L-O-P-T-T}
A.~K. Lerner, S.~Ombrosi, C.~P\'{e}rez, R.~H. Torres, and
  R.~Trujillo-Gonz\'{a}lez, \emph{New maximal functions and multiple weights
  for the multilinear {C}alder\'{o}n-{Z}ygmund theory}, Adv. Math. \textbf{220}
  (2009), no.~4, 1222--1264.

\bibitem{Moen09}
Kabe Moen, \emph{Weighted inequalities for multilinear fractional integral
  operators}, Collect. Math. \textbf{60} (2009), no.~2, 213--238.

\bibitem{Muckenhoupt-Wheeden74}
B.~Muckenhoupt and R.~Wheeden, \emph{Weighted norm inequalities for fractional
  integrals}, Trans. Amer. Math. Soc. \textbf{192} (1974), 261--274.

\bibitem{PPTT}
Carlos P\'{e}rez, Gladis Pradolini, Rodolfo~H. Torres, and Rodrigo
  Trujillo-Gonz\'{a}lez, \emph{End-point estimates for iterated commutators of
  multilinear singular integrals}, Bull. Lond. Math. Soc. \textbf{46} (2014),
  no.~1, 26--42.

\bibitem{PT03}
Carlos P\'{e}rez and Rodolfo~H. Torres, \emph{Sharp maximal function estimates
  for multilinear singular integrals}, Harmonic analysis at {M}ount {H}olyoke
  ({S}outh {H}adley, {MA}, 2001), Contemp. Math., vol. 320, Amer. Math. Soc.,
  Providence, RI, 2003, pp.~323--331.

\bibitem{Prado01cal}
Gladis Pradolini, \emph{A class of pairs of weights related to the boundedness
  of the fractional integral operator between {$L^p$} and {L}ipschitz spaces},
  Comment. Math. Univ. Carolin. \textbf{42} (2001), no.~1, 133--152.

\bibitem{Pradolini01}
\bysame, \emph{Two-weighted norm inequalities for the fractional integral
  operator between {$L^p$} and {L}ipschitz spaces}, Comment. Math. (Prace Mat.)
  \textbf{41} (2001), 147--169.

\bibitem{Pradolini10}
\bysame, \emph{Weighted inequalities and pointwise estimates for the
  multilinear fractional integral and maximal operators}, J. Math. Anal. Appl.
  \textbf{367} (2010), no.~2, 640--656.

\bibitem{PR}
Gladis Pradolini and Jorgelina Recchi, \emph{On optimal parameters involved
  with two-weighted estimates of commutators of singular and fractional
  operators with {L}ipschitz symbols}, Czechoslovak Math. J. \textbf{73(148)}
  (2023), no.~3, 733--754. 

\bibitem{SW92}
E.~Sawyer and R.~L. Wheeden, \emph{Weighted inequalities for fractional
  integrals on {E}uclidean and homogeneous spaces}, Amer. J. Math. \textbf{114}
  (1992), no.~4, 813--874.

\end{thebibliography}

\def\cprime{$'$}
\providecommand{\bysame}{\leavevmode\hbox to3em{\hrulefill}\thinspace}
\providecommand{\MR}{\relax\ifhmode\unskip\space\fi MR }
% \MRhref is called by the amsart/book/proc definition of \MR.
\providecommand{\MRhref}[2]{%
  \href{http://www.ams.org/mathscinet-getitem?mr=#1}{#2}
}
\providecommand{\href}[2]{#2}

\end{document}